\documentclass[11pt,twoside]{article}
\usepackage[utf8]{inputenc}
\usepackage{url,amsmath,enumerate,fancyhdr,amssymb,amsthm,lscape,ifpdf,graphicx,float,mathtools}
\IfFileExists{pstricks.sty}{\usepackage{pstricks}}{}
\IfFileExists{epsf.sty}{\usepackage{epsf}}{}
\IfFileExists{bbm.sty}{\usepackage{bbm}}{}
\IfFileExists{soul.sty}{\usepackage{soul}}{}
\DeclarePairedDelimiter{\ceil}{\lceil}{\rceil}
\DeclarePairedDelimiter{\floor}{\lfloor}{\rfloor}

\DeclareMathOperator*{\argmin}{argmin}
\DeclareMathOperator{\dist}{dist}

\DeclareMathOperator{\proj}{proj}
\usepackage{blindtext,relsize}
\usepackage[dvipsnames]{xcolor}

\colorlet{myblue}{blue!70!black}

\usepackage{algorithm,algpseudocode}
\usepackage[section]{placeins}
\usepackage{geometry}
\usepackage{setspace}
\numberwithin{equation}{section}
\usepackage[title]{appendix}
\newcommand{\norm}[1]{\left\lVert#1\right\rVert}

\renewcommand{\arraystretch}{1.2}

\mathchardef\mhyphen="2D
\newtheorem{theorem}{Theorem}[section]
\newtheorem{lemma}[theorem]{Lemma}
\theoremstyle{definition}
\newtheorem{definition}[theorem]{Definition}
\newtheorem{example}[theorem]{Example}

\newtheorem{proposition}[theorem]{Proposition}

\newtheorem{remark}[theorem]{Remark}
\usepackage{booktabs,tabularx,threeparttable}

\newcolumntype{Y}{>{\raggedright\arraybackslash}X}
\newcolumntype{Z}{>{\centering\arraybackslash}X}
\newcolumntype{P}[1]{>{\centering\arraybackslash}p{#1}}

\makeatletter
\DeclareMathOperator*{\infd}{inf\vphantom{\operator@font p}}
\DeclareMathOperator*{\supx}{su\smash{\operator@font p}}
\makeatother
\usepackage[numbers]{natbib}
\allowdisplaybreaks[4]
\usepackage{hyperref}
\hypersetup{colorlinks,citecolor=blue,filecolor=blue,linkcolor=black,urlcolor=blue,
 pdftitle={branch-and-filter},
 pdfsubject={Global convergence and iteration complexity}}
\newcommand{\R}{\overline{\mathbb {R}}}
\newcommand{\Z}{\mathbb Z}
\newcommand{\F}{\mathcal F}
\newcommand{\Ocal}{\mathcal O}
\newcommand{\Acal}{\mathcal A}
\newcommand{\Gcal}{\mathcal G}
\newcommand{\Lcal}{\mathcal L}
\newcommand{\Ical}{\mathcal I}
\newcommand{\Mcal}{\mathcal M}

\newcommand{\eps}{\varepsilon}

\newcommand{\res}{\mathrm{res}}

\newcommand{\pos}[1]{\left[#1\right]_+}

\algrenewcommand\algorithmicrequire{\textbf{Input:}}
\algrenewcommand\algorithmicensure{\textbf{Output:}}
\begin{document}
\title{A branch-and-filter method for mixed-integer linear programming}
\author{Nikos Dimou\footnote{Email: dimou@unc.edu}}
\date{University of North Carolina at Chapel Hill\\Department of Statistics and Operations Research}
\maketitle
\pagestyle{plain}

\vspace{-0.4cm}
\begin{abstract}

We develop a new tree-search framework for mixed-integer linear programming problems, which incorporates a filter mechanism as a node-acceptance device. The introduced \textit{branch-and-filter} algorithm combines proper branching actions with a filter on integrality violation and an incumbent-dependent objective coordinate. Inspired by classical filter methods in nonconvex constrained optimization, the proposed method moves to a restoration phase whenever sufficient progress  cannot be provided by the filter and funnel mechanisms, the latter of which is anchored at global relaxation lower bounds. Under mild assumptions, the introduced method either certifies infeasibility in finitely many iterations, or converges to global optimality. Moreover, we establish a total outer iteration complexity bound of order $\mathcal{O}(\varepsilon^{-1})$ to certification of an $\varepsilon$-optimal mixed-integer solution through fixed-integer linear programming recovery. We finally perform several numerical experiments to study the performance of branch-and-filter against benchmark solvers, under a branch-and-bound, a branch-and-cut, and a novel cell-deletion realization of the proposed restoration oracle.
\end{abstract}

\section{Introduction}\label{sec:intro}
We consider the mixed-integer linear programming (MILP) problem
\begin{equation}\label{eq:milp}
 \begin{aligned}
    &\inf_{x}\;\; c^\top x \\
    &\;\text{s.t.} \;Ax\leq b \\
    &\qquad x\in \mathbb{Z}^{p}\times \mathbb{R}^{q}
 \end{aligned}
\end{equation}
with $p\geq 1$, $q\geq 0$ and rational parameters $A\in \mathbb{Q}^{m\times (p+q)}$, $b\in\mathbb{Q}^{m}$ and $c\in \mathbb{Q}^{p+q}$. Over the past several decades, numerous methods have been developed for solving \eqref{eq:milp}, which offer global optimality guarantees.  The classical branch-and-bound (BB) framework \citep{land2009automatic} combines valid relaxation bounds with branching decisions that preserve feasible solutions. Branch-and-cut (BC) strengthens these bounds using valid inequalities \citep{balas1996mixed}, whereas branch-and-price computes them through column generation \citep{barnhart1998branch}, the latter of which enjoy finite termination properties \citep{vanderbeck1996exact}. Related cutting-plane methods admit explicit finite-convergence proofs for  MILPs with bounded integer variables \citep{balas1993lift,chen2011finite}. Global optimality warranties also arise from various decomposition-based methods, which partition \eqref{eq:milp} into lower-dimensional MILP subproblems \cite{benders1962partitioning, sun2024decomposition}. 

The effectiveness and convergence guarantees of the aforementioned tree-based methods depend substantially on their search strategies. Best-bound search prioritizes nodes that determine the global relaxation bound, whereas depth-first search follows successive descendants to seek feasible incumbents. Best-projection and best-estimate rules incorporate additional information about integer infeasibility and the estimated objective cost of enforcing integrality, favoring nodes expected to contain good feasible solutions. Several hybrid strategies have also been developed to balance these objectives, by combining backtracking and diving heuristics (see \cite{linderoth1999computational} for a survey of all the above).  Although these strategy rules exploit information relevant to feasibility and optimality, their selection criteria are not based on conditions sufficient for progress towards integrality. This motivates a complementary approach in which the node-acceptance mechanism itself provides such analytical control and optimality convergence guarantees.

The aim of this work is to develop a new tree-search framework for \eqref{eq:milp} whose search strategy forces vanishing integer infeasibility and asymptotic progress towards global optimality. Inspired by penalty-free methods in nonconvex constrained optimization \cite{fletcher2002nonlinear}, we develop a \textit{branch-and-filter} (BF) algorithm that enjoys global convergence warranties. The proposed method incorporates a filter mechanism, which compares infeasibility and objective reduction measures to determine the next selected node at each iteration, as to ensure asymptotic feasibility along an infinitely generated fractional sequence. The search strategy includes an additional funnel mechanism, whose primary goal is to keep track of ``promising" (e.g., best-bound) nodes that can drive the iterates to global optimality. When the introduced search strategy cannot successfully identify the next parent node, the algorithm calls a restoration oracle, as it is commonly the case in classical filter methods in the nonconvex optimization literature \cite{FletcherLeyfferToint2002}. Although the restoration oracle can be implemented in practice by modifying standard (BB) or (BC) methods, when the integer variables are bounded, we introduce a novel cell-deletion algorithm that successively restricts the feasibility region by removing cell subfamilies of filter-inadmissible points.

Under standard assumptions, we prove that the (BF) algorithm either terminates finitely with a certificate of infeasibility or an optimal mixed-integer solution, or generates an infinite sequence of iterates whose objective values converge to the optimal one. Importantly, whenever a convergent subsequence exists, for example, when the relaxed linear programming (LP) polyhedral form of \eqref{eq:milp} is compact, it always converges to a global optimal solution. Further, we show that, under a boundedness assumption on the filter objective metric function, (BF) outputs an $\varepsilon$-optimal mixed-integer solution in at most $\mathcal{O}(\varepsilon^{-1})$ outer iterations. This iteration complexity result appears to be novel in the relevant literature, and indicates that the computational hardness of solving \eqref{eq:milp} is explicitly embedded into the introduced restoration phase. Under minor simplifications of the main algorithmic structure, we also show that the filter device can be utilized to determine mixed-integer feasibility of \eqref{eq:milp} instead of global optimality. 

Historically, a couple of works have introduced the terminology of a ``filter" in MILP, but with distinct interpretation and purpose. Balas \cite{balas1967discrete} was the first to introduce a ``filter method" for binary mixed-integer programming, which solves a ``filter-problem" that screens candidate solutions before the main algorithmic tests are applied. San Segundo et al. \cite{san2022new} also use the term “branch-and-filter,” where filtering refers to a process that reduces variable domains through graph-coloring and satisfiability techniques. We are aware of few works that have previously employed similar filter mechanisms to this paper, all of which focus on mixed-integer nonlinear programming (MINLP) problems. Leyffer \cite{leyffer2001integrating} integrates a filter-SQP method with (BB), allowing branching before each continuous nonlinear relaxation is fully solved. This filter method was subsequently combined with a linearization-based (BC) framework \cite{abhishek2010filmint}. In addition, Fernandes et al. \cite{fernandes2014branch} employ a multi-start coordinate-search filter method to solve continuous relaxations within (BB) for nonsmooth, nonconvex MINLP problems. Filters have also been incorporated into pattern-search methods for nonlinearly constrained mixed-variable problems, in a similar derivative-free optimization setting \cite{abramson2004filter}. It is worth mentioning that these approaches use filters with the much different purpose of balancing objective improvement and nonlinear constraint violation, either within mixed-variable search or continuous subproblem solvers.

The rest of the paper is organized as follows. In Section \ref{sec:algorithm} we formally introduce the filter mechanism and present the (BF) algorithm. Sections \ref{sec:theory} and \ref{sec:complexity} include the main global convergence and iteration complexity results, respectively. In Section \ref{sec:restoration} we realize the restoration oracle by introducing a cell-deletion algorithm, which can be solved in finitely many iterations via successive LP solves or disjunctive programming methods, under a bounded integer variable assumption. Section \ref{sec:experiments} investigates the computational performance of (BF) under various implementations of the restoration oracle. Finally, we discuss future directions and natural extensions in Section \ref{sec:discussion}.

\section{Algorithm overview}\label{sec:algorithm}

\subsection{Notation}

We define the objective function $f:\mathbb{R}^{n}\to \mathbb{R}$ with $f(x):=c^\top x$ given the fixed vector $c\in \mathbb{Q}^{n}$, where $n:=p+q\geq 1$, and we write $f_k:=f(x_k)$ for simplicity. For $x=(z,y)\in \mathbb{R}^{p}\times \mathbb{R}^{q}$, we will sometimes write the polyhedral constraint ``$Ax\leq b$" as ``$A_{z} z+ A_{y} y\leq b$" in order to distinguish between the integer and continuous parts, where $A_{z}\in \mathbb{Q}^{m\times p}$ and $A_{y}\in \mathbb{Q}^{m\times q}$. We can thus reformulate \eqref{eq:milp} as $\inf_{x\in X} f(x)$, where
\begin{equation*}
    X:=P\cap (\mathbb{Z}^{p}\times \mathbb{R}^{q})
\end{equation*}
denotes the mixed-integer feasibility set, with $P$ representing the closed polyhedron
\begin{equation*}
    P:=\{(z,y)\in\mathbb{R}^p\times\mathbb{R}^q:A_z z+A_y y\le b\}.
\end{equation*}
We denote by $X^*\subseteq X$ the set of all mixed-integer optimal solutions and by $z^*$ the optimal value of \eqref{eq:milp}, whenever the latter exists and is finite. We further denote $\overline{\mathbb{R}}:=\mathbb{R}\cup \{\infty\}$, $\mathbb{R}_{+}:=[0,+\infty)$, $\mathbf{1}:=(1,...,1)^\top\in \mathbb{R}^{p}$ and $[w]_{+}:=\max\{0,w\}$ for any $w\in \mathbb{R}$. For functions $\zeta:\mathbb{R}\to\mathbb{R}$, $\xi:\mathbb{R}\to[0,\infty)$, we write
$\zeta(\cdot)=\mathcal{O}\big(\xi(\cdot)\big)$ in order to indicate that
$\lvert \zeta(\cdot)\rvert \leq C \xi(\cdot)$ for some $C>0$. Lastly, we denote the cardinality of a set $\mathcal{A}$ by $|\mathcal{A}|$.

\subsection{Branching and incumbent update}

The algorithmic structure resembles that of the state-of-the-art branching iterative methods, by generating a number nodes at each iteration via valid inequality actions. At the start of every outer iteration $k$, (BF) branches an open node $Z_k$, the latter of which is chosen as the filter-admissible node of the previous iteration. The algorithm then solves the relaxed LP formulations of all (finitely many) offspring of $Z_k$. More specifically, a child node $N_k$ represents a polyhedron $P(N_k)\subseteq P$ and the corresponding mixed-integer set $X(N_k):=P(N_k)\cap(\Z^p\times\mathbb{R}^q)$. For every such node, the algorithm computes
\begin{equation}\label{eq:lpoptimal}
    x_{N_k}\in \argmin_{x\in P(N_k)} f(x),
\end{equation}
whenever an optimal solution exists. After the relaxed LPs of all child nodes are solved, the parent $Z_k$ is removed and a standard fathoming procedure follows based on the current incumbent $U_k$. The value $U_k\in\R$ represents the objective value of a stored feasible point, whenever it is finite. Initially, no such point need be available. Given an incumbent $U_k$ and a node $N_k$, we define the \textit{strict improvement sets} by
\begin{equation}\label{eq:strict-target}
 X_{<U_k}:=\begin{cases}X &\;\;\text{if }\;\;U_k=+\infty,\\ \{x\in X:f(x)<U_k\}&\;\;\text{if }\;\;U_k<+\infty,\end{cases}
 \quad\text{and}\quad X_{<U_k}(N_k):=X(N_k)\cap X_{<U_k}.
\end{equation}
During this first phase of iteration $k$, a node $N_k$ can be discarded if its relaxation is empty, if a certified lower bound is at least the current finite incumbent, or if $X_{<U_k}(N)=\varnothing$. If $x_{N_k}=(z_{N_k},y_{N_k})\in X$, the proposed method explicitly uses the following fixed-integer incumbent update scheme: For a verified feasible point $x=(z,y)\in X$ (by omitting the index $N_k$ for simplicity of exposition), we retain its integer vector part $z\in \mathbb{Z}^{p}$ and solve the LP
\begin{equation}\label{eq:update1}
 v(z):=\min_{\widetilde y\in Y(z)}
       c^\top (z,\widetilde y)
\end{equation}
over the continuous fiber
\begin{equation}\label{eq:update2}
    Y(z):=\left\{\widetilde y\in\mathbb{R}^q:A_y\widetilde y\le b-A_z z\right\}.
\end{equation}
We then replace $U_k$ by $\min\{U_k,v(z_{N_k})\}$, and we store a corresponding original-problem feasible point and close the node. Observe that the output solution need not belong to the feasible set of $N_k$, since original-fiber optimization can remove node-specific restrictions on the continuous variables. Whenever a node is closed using a node-optimal solution, that node certificate is retained separately from the possibly improved global incumbent. All subsequent closure arguments use precisely this distinction, and follow from the described incumbent update procedure.

\subsection{The filter mechanism}

In its second phase, the (BF) algorithm incorporates a filter mechanism. This technique is inspired by the one commonly used in nonconvex constrained optimization, whose aim is to replace a penalty function and guarantee convergence to stationary points \cite{FletcherLeyfferToint2002}. The internal goal of a filter device, based on ideas from multi-objective optimization, is to retain pairs of function values that measure objective and infeasibility reduction. In classical nonconvex optimization methods, these functions usually correspond to the objective value and norm of the constraint violation, respectively \cite{fletcher2002nonlinear}. Although our filter mechanism is inspired by this literature and plays the same fundamental role of driving the sequence of iterations to feasibility, the violation measures considered are different. 

Let $x\in\mathbb{R}^{n}$ be a fractional point. First, instead of measuring actual objective progress at $x$, the proposed filter computes the objective improvement that $x$ would offer over the incumbent if it was mixed-integer feasible. In particular, let $\bar{L}:=f_{\text{low}}+\delta$ for some user chosen $\delta>0$, where
$f_{\text{low}}:=\min_{x\in P} f(x)$ denotes the relaxed LP value of \eqref{eq:milp} (if the latter is infeasible, we have nothing to show). We define the \textit{incumbent-objective gap} function $g:\mathbb{R}^{n}\times \R \to \mathbb{R}$ by
\begin{equation}\label{eq:g}
 g(x,U):=\begin{cases}
          \left[\bar L-f(x)\right]_{+}& \;\;\text{if }\;\;U=+\infty,\\
          U-f(x) &\;\;\text{if }\;\;U<+\infty.
         \end{cases}
\end{equation}
Before an incumbent exists, $g$ measures the shortfall below the reference level $\bar L$. Afterward, it simply measures the objective difference from the current incumbent.

The second filter function measures the integer violation of the infeasible point $x=(z,y)\in \mathbb{R}^{n}$. This is slightly different from the state-of-the-art filter methods in nonconvex optimization that study the violation of the entire feasibility set. More precisely, we define the \textit{integrality violation} function $h:\mathbb{R}^{n}\to [0,1/2]$ with
\begin{equation}\label{eq:h}
 h(x):=\dist_\infty(z,\Z^p)
      =\max_{1\le i\le p}\dist(z_i,\Z).
\end{equation}
Although we use the infinity distance norm in this work, others are also possible in theory. The choice \eqref{eq:h} relies on the fact that it produces convenient LP representations of inner subproblems (see Section \ref{sec:restoration}).

We now describe the exact filter mechanism. Let $\F_k:=\{(h_i,g_i):i\in\Ical_k\}$ denote the active \textit{filter}, for some $\mathcal{I}_k\neq \varnothing$, which includes all ``non-dominated" pairs identified by the algorithm at all iterations prior to $k$. Specifically, a pair $(h_i,g_i)$ obtained at iteration $i\in \mathcal{I}_k$ is said to \textit{dominate} another pair $(h(x),g(x,U))$ if
\begin{equation}\label{eq:dominance}
    h_i\leq h(x)\quad \text{and}\quad g_i\leq g(x,U).
\end{equation}
After the first branching phase is complete, the introduced (BF) method runs open nodes (i.e., non-fathomed nodes) through the filter and searches for an admissible point: Let $x_{N_k}\in \mathbb{R}^{n}$ be computed as in \eqref{eq:lpoptimal} for some open node $N_k$. We say that $x_{N_k}$ is \textit{filter-admissible} if 
\begin{equation}\label{eq:admission}
    h(x_{N_k})\leq \beta h_i\quad \text{or}\quad g(x_{N_k},U_k)\leq g(x_i,U_i) - \gamma  h(x_{N_k})\quad\forall i\in \mathcal{I}_k,
\end{equation}
where $\beta,\gamma$ are user-chosen parameters satisfying $\beta,\gamma\in (0,1)$, and $U_k$ is the last update on the incumbent value at iteration $k$. The acceptability criterion \eqref{eq:admission} is close in spirit to that of \cite{chin2003global}. Formally, we introduce the set of admissible points related to the filter $\mathcal{F}_k$ - essentially the non-empty index set $\mathcal{I}_k$ of previous admissible iterates - as follows:
\begin{equation}\label{eq:filter-set}
 \Acal_k^F:=\left\{x\in E(U_k):
 h(x)\le\beta h_i\ \text{or}\
 g(x,U_k)\le g_i-\gamma h(x),\; i\in\Ical_k\right\}.
\end{equation}
Here, $E(U_k)$ represents the set of \textit{eligible points}; we define
\begin{equation}
E(U_k):=
\begin{cases}
\mathbb{R}^{n} & \text{if } U_k=+\infty,\\
\{x\in\mathbb{R}^{n}:f(x)\le U_k\} & \text{if } U_k<+\infty.
\end{cases}
\end{equation}
Note that $g(\cdot,U)$ is always non-negative over $E(U)$. Following the standard filtering procedure of the previous works, if $x_{N_k}\in \mathcal{A}_k^F$ for some open node $N_k$, then all pairs dominated by $(h(x_{N_k}),g(x_{N_k},U_k))$ are removed from the filter (according to \eqref{eq:dominance}), and the filter is updated so that it includes $(h(x_{N_k}),g(x_{N_k},U_k))$. It is important to highlight that, by the first phase of the outer iteration $k$, we always have $h(x_{N_k})>0$ for every open node $N_k$, that is, every filter-admissible point is fractional. To this end, the reason why we choose the filter acceptance test \eqref{eq:admission} instead of simpler ones (e.g., that of \cite{fletcher2002nonlinear}), is because it avoids pathological ``cyclical" scenarios. More specifically, it forbids points from accumulating in local neighborhoods of previously accepted points, thus enforcing $h_k\to 0$. At a high-level, this is tied to the fact that $\mathcal{A}_k^F$ represents a \textit{slanting envelope} (see Figure \ref{fig:filter}).

\begin{figure}[H]
    \centering
 \includegraphics[width=\textwidth]{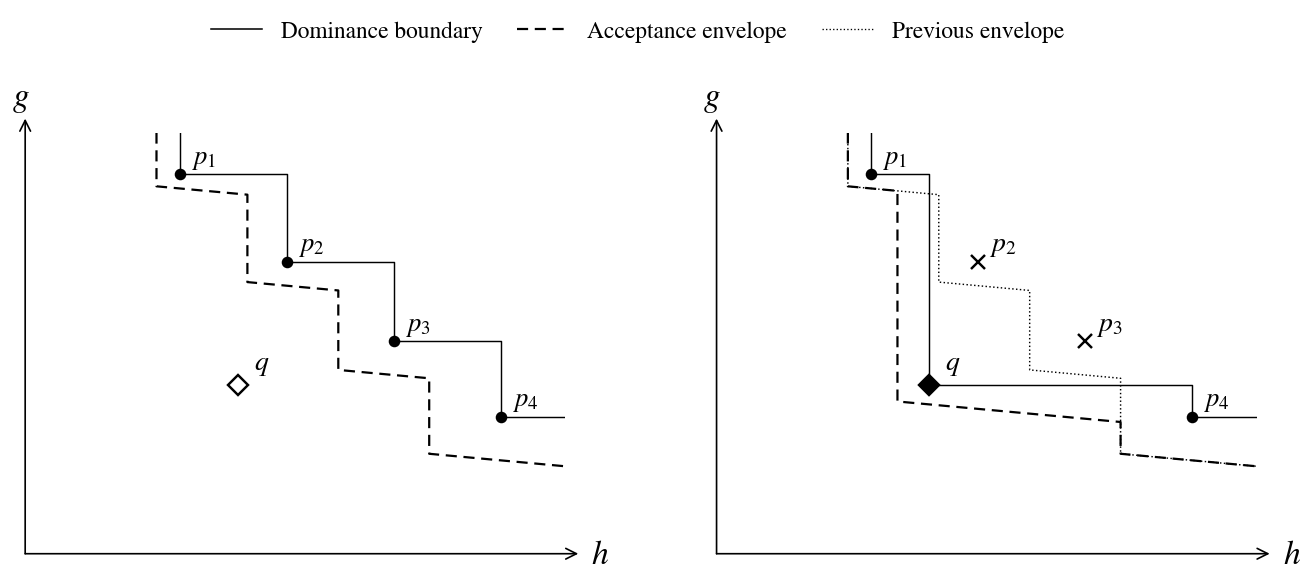}
    \caption{\small A filter admission–deletion update. The point $q$ lies inside the ``slanting envelope" region generated by \eqref{eq:admission}, and is thus filter-admissible. Insertion of $q$ inside the filter tightens the envelope and permits deletion of the dominated entries $p_2$ and $p_3$ according to \eqref{eq:dominance}.}
    \label{fig:filter}
\end{figure}

In strike contrast to classical filter methods that aim on decreasing $f$, decreasing $g$ rewards increasing the candidate objective, which is natural when candidate values represent improving relaxation bounds. This opposite intention can lead to admissions that would be otherwise rejected by a standard $f$-filter (i.e., a filter with metric $f$ in place of $g$). We illustrate this behavior with an example.

\begin{example}\label{ex:ex}
Consider the integer linear program
\begin{equation*}
\min f(z):=z_2\quad \text{s.t.}\;\;z\in P\cap\mathbb Z^{2},\;\; P:=\left\{ z\in [0.1,1]^2:9z_2-2z_1\geq 0.7 \right\}.
\end{equation*}
The unique root LP solution is $z_0=(0.1,0.1)$. Thus, $f(z_0)=0.1$ and $h(z_0)=0.1$. Suppose we perform a branch on $z_1$ using the standard disjunction $z_1\leq \floor{0.1}=0$ and $z_1\geq \ceil{0.1}=1$. The left child node is infeasible. In the right node, say $N$, the existing upper bound forces $z_1=1$, so its relaxation is 
\begin{equation*}
P(N)=\{(1,z_2):0.3\leq z_2\leq 1\}.
\end{equation*}
Thus, its unique LP solution is $z_N=(1,0.3)$, with $f(z_N)=0.3$ and $h(z_N)=0.3$. Choose $\delta=0.5$, $\beta=0.8$, and $\gamma=0.2$. Since the left child is infeasible and the right child's LP solution
is fractional, the branching step does not produce an incumbent,
so $U$ remains $+\infty$. With these parameters, we compute $\bar{L}=0.6$, $g(z_0,+\infty)=0.5$ and $g(z_N,+\infty)=0.3$. The integrality-reduction condition fails because
\begin{equation*}
h(z_N)=0.3>\beta h(z_0)=0.8(0.1)=0.08.
\end{equation*}
 Nevertheless, the reduction condition with respect to $g$ holds:
\begin{equation*}
g(z_N,+\infty)=0.3
\le g(z_0,+\infty)-\gamma h(z_N)
=0.5-0.06=0.44.
\end{equation*}
Thus, $z_N$ is filter-admissible. By contrast, the corresponding $f$-filter is initialized with the pair $(0.1,0.1)$, and rejects the same candidate $z_N$. Indeed, the integrality condition still fails, and
\begin{equation*}
f(z_N)=0.3
>
f(z_0)-\gamma h(z_N)
=0.1-0.06=0.04.
\end{equation*}
Both coordinates $(h,f)$ increase, whereas the pair $(h,g)$
trades increased integrality violation for a decreased shortfall
below the reference level $\bar{L}$. Since the left child node is closed, all integer-feasible solutions remain in $P(N)$,
and the global relaxation bound increases. Thus, the filter with the introduced incumbent-objective gap metric accepts the LP solution associated with a stronger lower bound, despite its increased objective value
and integrality violation, whereas the corresponding $f$-filter
rejects it.

\end{example}

\subsection{The funnel mechanism}

In the case when the proposed algorithm does not output a certificate of infeasibility or an optimal solution in finitely many iterations, the filter mechanism is responsible for driving the iterates to mixed-integer feasibility. However, it cannot guarantee convergence to optimality on its own. In the context of nonconvex constrained optimization, optimality is commonly achieved by implementing the filter method in line-search or trust-region frameworks \cite{fletcher2002global,wachter2005line}, and by classifying iterates to different types, depending on the reduction given by certain model functions. In this paper, we utilize a funnel mechanism, which has been known to guarantee convergence to optimality, under a different optimization context \cite{gould2010nonlinear}. 

Formally, we introduce an arbitrary (user-chosen) continuous and non-decreasing \textit{funnel function} of the form
\begin{equation}\label{eq:funnelfunction}
    \omega:[0,1/2]\to\mathbb{R}_+,\quad \text{with}\;\; \omega(0):=0.
\end{equation}
 After the branching procedure, incumbent updates and fathoming tests have been performed, all nodes $N_k$ that are still open are added to the \textit{open pool} $\Ocal_k$, and their relaxed LP optimal solutions are stored. We then compute the global relaxation lower bound over the set of all open nodes
\begin{equation}\label{eq:low}
    \ell_k:=\min_{N\in \Ocal_k}f(x_{N})
\end{equation}
with $x_N$ as in \eqref{eq:lpoptimal}. At a non-empty open pool state, we define the \textit{funnel set}
\begin{equation}\label{eq:funnel}
 \Gcal_k:=\left\{x\in\bigcup_{N\in\Ocal_k}P(N):
                 f(x)-\ell_k\le\omega(h(x))\right\}.
\end{equation}
At its core, (BF) searches for a filter-admissible point $x_{N}$ over $\mathcal{G}_k$, i.e., for a \textit{filter-funnel-admissible} candidate. This is the key mechanism that guarantees both asymptotic feasibility and asymptotic optimality for the class of MILP problem considered (Theorem \ref{th:main}).

\subsection{The restoration phase}

Despite the fact that a funnel-admissible point always exists (by \eqref{eq:low}), it might be the case that $x_N\not\in \mathcal{A}_k^F$ for every $N\in \mathcal{O}_k$. As it is common in the literature of filter methods, the algorithm resorts to a \textit{restoration phase}\footnote{This is oftentimes called \textit{feasibility restoration} \cite{nocedal2006numerical}, since its main goal is to compute a ``sufficiently feasible" solution. We deviate from this terminology due to the fact that our restoration phase serves a different purpose.} that prohibits the algorithm from stalling. The restoration oracle employed here is, however, substantially different from those seen in nonconvex optimization, in the sense that it is searching for a filter-admissible point that is not a minimizer of the form \eqref{eq:lpoptimal}, instead of an approximate feasible solution. To be more specific, the restoration phase calls an oracle on an open node $N_k$ and returns one of the following (ordered) outcomes:
\begin{description}\hypertarget{or1}{}\hypertarget{or2}{}\hypertarget{or3}{}
\item[\hyperlink{or1}{(OR1)}] A certificate that $X_{<U_k}(N_k)=\varnothing$.
\item[\hyperlink{or1}{(OR2)}] A point $x_{N_k}^*\in\argmin_{x\in X(N_k)}f(x)$ with $f(x_{N_k}^*)<U_k$.
\item[\hyperlink{or1}{(OR3)}] A point $x_{k+1}\in\Acal_k^{\res}(N_k)\setminus X(N_k)$, where $\mathcal{A}_k^{\text{res}}(N_k):=P(N_k)\cap \mathcal{A}_k^F\cap \mathcal{G}_k$.
\end{description}

If \hyperlink{or1}{(OR1)} or \hyperlink{or1}{(OR2)} is returned by the oracle, then the incumbent is updated (by the second outcome) and the node is fathomed. The restoration phase then moves to another open node, and repeats the same procedure, until a filter-admissible iteration is returned \hyperlink{or1}{(OR3)}, or the open pool $\mathcal{O}_k$ becomes empty. In the latter case, the algorithm terminates with either a certificate of infeasibility or a global optimal solution, depending on the value of $U_k$.

If \hyperlink{or1}{(OR3)} is returned by the oracle, then the pair $(h,g)$ corresponding to $x_{k+1}$ is added to the filter and all pairs dominated by the latter are removed. In the case that the restoration oracle is implemented by a cutting procedure that modifies the geometric structure of $P(N_k)$ (as is the case in the proposed realization in Section \ref{sec:restoration}), the algorithm must replace the node $N_k$. Specifically, (BF) chooses a replacement node $Z$ such that $P(Z)$ is a polyhedron and
\begin{equation}\label{eq:replacement}
    x_{k+1}\in P(Z)\subseteq P(N_k),
    \qquad
    \bigl(X(N_k)\setminus X(Z)\bigr)\cap X_{<U_k}
    =\varnothing.
\end{equation}
Note that such a replacement always exists, since one can merely choose $Z$ with $P(Z)=P(N_k)$. Thus, \eqref{eq:replacement} imposes no additional existence assumption on the restoration oracle.

\subsection{The branch-and-filter algorithm}

The algorithm just described is given below. An initialization procedure solves the LP relaxation of the original MILP problem \eqref{eq:milp} and returns a solution $x_0\in\mathbb{R}^{n}$. If $x_0\in X$, then the algorithm terminates with an optimal mixed-integer solution. Otherwise, we set $f_{\text{low}}$ to be the optimal value, and initialize $U_0:=+\infty$. Further, right after the first branching phase, and right before invoking the oracle in the restoration phase, our algorithm performs a series of global tests: An empty pool $\mathcal{O}_k$ returns infeasibility if $U_k=+\infty$ and optimality otherwise. For a nonempty pool, the additional test $U_k<+\infty$ and $U_k\le\ell_k$ returns global optimality. Finally, at the end of every outer iteration $k$ the filter admits a point $x_{k+1}$ and we set $g_{k+1}:=g(x_{k+1},U_k)$ and $h_{k+1}:=h(x_{k+1})$ for simplicity, where $U_k$ represents the stored incumbent value at admission.

\begin{algorithm}[!htbp]
\caption{Branch-and-filter (BF)}\label{alg:bf}
\small
\begin{algorithmic}[]
\Require Problem \eqref{eq:milp} parameters; $\beta\in (0,1)$, $\gamma\in (0,1)$, $\delta>0$; funnel function $\omega$.
\State Solve the LP $\min_{x\in P}f(x)$.
\If{$P=\varnothing$} 
\State \textbf{terminate} with $X=\varnothing$. 
\EndIf
\State Choose $x_0\in \argmin_{x\in P}f(x)$.
\If{$x_0\in X$}
\State \textbf{terminate} with $x_0\in X^*$.
\EndIf
\State Set $N_0:=N^{\text{root}}$, $f_{\text{low}}:=f(x_0)$, $\bar L:=f_{\text{low}}+\delta$, $U_0:=+\infty$, $\Ocal_0:=\{N_0\}$, $Z_0:=N_{0}$, and $\F_0:=\{(h(x_0),\delta)\}$, where $N^{\text{root}}$ represents the root node of the branching tree.
\For{$k=0,1,2,\ldots$}
    \State Perform a branch action at $Z_k$ and replace it in $\Ocal_k$ with its successor family $\{N_{k,1},...,N_{k,s}\}$.
    \For{$j=1,...,s$}
    \State Compute $x_{N_{k,j}}$ by \eqref{eq:lpoptimal} (set $\Ocal_k\gets\Ocal_k\setminus\{N_{k,j}\}$ whenever $P(N_{k,j})=\varnothing$, and move to $j+1$).
    \If{$x_{N_{k,j}}\in X$}
    \State Update $U_k$ by the procedure in \eqref{eq:update1}-\eqref{eq:update2} and set $\Ocal_k\gets\Ocal_k\setminus\{N_{k,j}\}$.
    \EndIf
    \EndFor
    \State Fathom all nodes whose optimal relaxed LP value is at least $U_k$.
    \If{$\Ocal_k=\varnothing$}
    \State \textbf{terminate} with $X=\varnothing$ if $U_k=+\infty$, and with $x^*_k\in X$ such that $f(x_k^*)=U_k$ otherwise.
  \EndIf
  \State Compute $\ell_k$  by \eqref{eq:low}.
  \If{$U_k<+\infty$ and $U_k\le\ell_k$}
    \State \Return $U_k=z^*$ and \textbf{terminate} with $x^*_k\in X$ such that $f(x_k^*)=U_k$.
  \EndIf
  \If{$x_{N}\in \mathcal{A}_k^F\cap \mathcal{G}_k$ for some $N\in \mathcal{O}_k$}
    \State Set $(x_{k+1},N_k^*):=(x_N,N)$ and $d_k:=\textnormal{FP}$.
  \Else
    \State Go to \textbf{Restoration Phase} (Algorithm \ref{alg:rp}).
    \State Receive $(x_{k+1},N_k)$, updated $U_k$, $\Ocal_k$ and its (optional) lower-bound certificate.
    \State Recompute $\ell_k$ by \eqref{eq:low} and set $d_k:=\textnormal{RP}$.
  \EndIf
  \State Set $(h_{k+1},g_{k+1}):=(h(x_{k+1}),g(x_{k+1},U_k))$, where $U_k$ represents the last incumbent update.
  \State Delete coordinate-wise dominated pairs by \eqref{eq:dominance} and insert $(h_{k+1},g_{k+1})$ into $\F_k$.
  \If{$d_k=\textnormal{RP}$}
    \State Replace $N_k$ by $Z$ in $\Ocal_k$ according to \eqref{eq:replacement} and store its certified LP value.
  \Else
    \State Set $Z:=N_k^*$.
  \EndIf
  \State Set $\mathcal{O}_{k+1}\gets \Ocal_k$, $\mathcal{F}_{k+1}\gets \mathcal{F}_k$, $U_{k+1}\gets U_k$ and $Z_{k+1}\gets Z$.
\EndFor
\end{algorithmic}
\end{algorithm}

\begin{algorithm}[!htbp]
\caption{Restoration Phase}\label{alg:rp}
\small
\begin{algorithmic}[]
\Require Outer iteration $k$, open pool $\Ocal_k$, incumbent $U_k$, filter $\mathcal{F}_k$.
  \If{$\Ocal_k=\varnothing$}
    \State \textbf{terminate} with $X=\varnothing$ if $U_k=+\infty$, and with $x^*_k\in X$ such that $f(x_k^*)=U_k$ otherwise.
  \EndIf
  \State Recompute $\ell_k$  by \eqref{eq:low}.
  \If{$U_k<+\infty$ and $U_k\le\ell_k$}
    \State \Return $U_k=z^*$ and \textbf{terminate} all algorithms with $x^*_k\in X$ such that $f(x_k^*)=U_k$.
  \EndIf
  \State Choose $N_k\in\Ocal_k$ and call the \textbf{Restoration Oracle} with input $\Ocal_k$, $U_k$, $\mathcal{F}_k$, and $N_k$.
  \If{the outcome is \hyperlink{or1}{(OR1)}}
    \State Set $\Ocal_k\gets\Ocal_k\setminus\{N_k\}$ and \Return to \textbf{Restoration Phase} (Algorithm \ref{alg:rp}).
  \ElsIf{the outcome is \hyperlink{or1}{(OR2)}}
    \State Update $U_k$ by the procedure in \eqref{eq:update1}-\eqref{eq:update2}, set $\Ocal_k\gets\Ocal_k\setminus\{N_k\}$ and \Return to \textbf{Restoration Phase} (Algorithm \ref{alg:rp}).
  \Else
    \State \Return $(x_{k+1},N_k)$ to \textbf{Branch-and-filter} (Algorithm~\ref{alg:bf}).
  \EndIf
\Statex \hspace{-\algorithmicindent}\underline{\textbf{Restoration Oracle}}
\State \textbf{Input:} Open pool $\mathcal{O}_k$, node $N_k\in \mathcal{O}_k$, incumbent $U_k$, filter $\mathcal{F}_k$.
\State \textbf{Output:} Exactly one of \hyperlink{or1}{(OR1)}-\hyperlink{or1}{(OR3)}.
\end{algorithmic}
\end{algorithm}

\section{Global convergence and optimality}\label{sec:theory}

We make the following assumptions throughout the paper:\hypertarget{f1}{}\hypertarget{f2}{}\hypertarget{f3}{}
\vspace{-0.1cm}
\begin{description}
\item[\hyperlink{f1}{(F1)}] If $P\ne\varnothing$, then $\min_{x\in P}f(x)=f_{\text{low}}>-\infty$.
\vspace{-0.1cm}
\item[\hyperlink{f2}{(F2)}] All LP subproblems considered are solved exactly. Further, all branching procedures are finite and valid: A branch action on a node $Z_k$ produces child nodes $N_{k,1},...,N_{k,s}$, for some $s\geq 1$, which satisfy $P(N_{k,j})\subseteq P(Z_k)$ for every $j$ and $X(Z_k)=X(N_{k,1})\cup ... \cup X(N_{k,s})$. 

\item[\hyperlink{f3}{(F3)}] The restoration oracle returns exactly one of \hyperlink{or1}{(OR1)}-\hyperlink{or1}{(OR3)} after finitely many iterations.
\end{description}

Boundedness of the optimal value (whenever one exists) is a common assumption in the relevant literature (for instance, see \cite{boland2015augmented,feizollahi2017exact}). \hyperlink{f1}{(F1)} is also strictly weaker than compactness of a nonempty relaxation \cite{sun2024decomposition, testa2019distributed}. Since a linear function with finite infimum on a nonempty polyhedron attains that infimum, \hyperlink{f1}{(F1)} provides the root optimizer and finite bound $f_{\text{low}}$; the same conclusion holds for every nonempty node relaxation contained in $P$ \citep{Schrijver1986}.
The finite-action requirement in \hyperlink{f2}{(F2)} excludes an unending branching loop hidden inside a single action. Standard proper integer branching satisfies the prescribed coverage requirements \cite{linderoth1999computational}.  Lastly, \hyperlink{f3}{(F3)} is a substantial oracle condition whose finite step count is generally hard to satisfy in practice, unless additional assumptions are imposed. In Section \ref{sec:restoration}, we will provide a novel realization under a boundedness assumption on the integer variables of \eqref{eq:milp}.

The filter mechanism implicitly requires information about all admitted pairs, although admission tests use only the current filter. This will be made clear in the proof of the main result. The following lemma justifies the passage from current to historical records.

\begin{lemma}\label{lem:historical}
Suppose \hyperlink{f1}{(F1)}-\hyperlink{f3}{(F3)} hold. Then, for every $i<j$, we have $h_j\le\beta h_i$ or $g_j\le g_i-\gamma h_j$.
\end{lemma}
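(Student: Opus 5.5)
We argue by induction on $j$, with the induction hypothesis stated for a fixed $i$. The claim should be read as a statement about the historical sequence of admitted pairs $(h_i,g_i)$, while admission at iteration $j-1$ only tests the point $x_j$ against the current filter $\F_{j-1}$. The key invariant we will establish is the following.

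\emph{Invariant.} For every $i<j$, the pair $(h_i,g_i)$ is either still present in $\F_{j-1}$ or is dominated, in the sense of \eqref{eq:dominance}, by some pair $(h_r,g_r)\in\F_{j-1}$ with $i<r<j$.

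The invariant follows by induction on the filter updates. A pair is removed only when a newly inserted pair dominates it. Domination is transitive, so if the dominating pair is itself removed later, the newer remover still dominates the original pair. Hence there is always a surviving representative in the current filter that dominates every deleted historical pair. Strictly, the representative's index satisfies $r>i$, although only membership in $\F_{j-1}$ matters. We will also note that the initial entry $(h(x_0),\delta)$ plays the role of $(h_0,g_0)$, with $g_0=\delta$. If $U_0=+\infty$, then $g(x_0,+\infty)=[\bar L-f_{\text{low}}]_+=\delta$, which is consistent with this identification.

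Given the invariant, fix $i<j$ and let $(h_r,g_r)\in\F_{j-1}$ be the representative, so that $h_r\le h_i$ and $g_r\le g_i$. We then split on how $x_j$ was obtained.
\begin{itemize}
\item If $x_j$ was obtained in the filter step ($d_{j-1}=\mathrm{FP}$), then $x_j\in\Acal_{j-1}^F$ by construction.
\item If $x_j$ was obtained in the restoration step ($d_{j-1}=\mathrm{RP}$), then \hyperlink{or1}{(OR3)} gives $x_j\in\Acal^{\res}_{j-1}(N_{j-1})\subseteq\Acal_{j-1}^F$.
\end{itemize}
In both cases \eqref{eq:filter-set} applied to the index $r$ yields
\[
h_j\le\beta h_r\quad\text{or}\quad g(x_j,U_{j-1})\le g_r-\gamma h_j .
\]
Monotonicity then gives the required conclusion: $\beta h_r\le\beta h_i$, and $g_r-\gamma h_j\le g_i-\gamma h_j$.

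The main obstacle is consistency of the $g$-values across incumbent changes. We have $g_j:=g(x_j,U_{j-1})$ with $U_{j-1}$ the last incumbent update, while $g_r$ was recorded with an older incumbent. The filter set \eqref{eq:filter-set} compares the current $g(x,U_k)$ against the stored values $g_i$. Provided the lemma is read with stored values, as the algorithm sets $(h_{k+1},g_{k+1})$ at admission, no recomputation is needed and the argument closes.

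We must still make sure that the $U_k$ appearing in the admission test coincides with the $U_k$ used to define $g_{k+1}$. In the restoration phase, \hyperlink{or1}{(OR2)} may update $U_k$ before \hyperlink{or1}{(OR3)} is returned. However, the oracle tests against $\Acal_k^F$ with the incumbent current at the call, and the algorithm records $g_{k+1}$ with that same last update. We will therefore verify that the oracle call at which \hyperlink{or1}{(OR3)} occurs uses exactly this final $U_k$.

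Assumptions \hyperlink{f1}{(F1)}–\hyperlink{f3}{(F3)} enter only to guarantee that every iteration is well defined and ends with an admitted point. In particular, \hyperlink{f3}{(F3)} ensures the restoration phase finishes, so that $x_j$ and $(h_j,g_j)$ exist.
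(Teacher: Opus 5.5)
Your proof is correct and takes essentially the same route as the paper. The paper follows the chain of deletions $i<i_1<\dots<i_n<j$, which ends at a pair still active in $\mathcal F_{j-1}$; this is your transitivity-of-dominance invariant. It then applies the acceptance test of $x_j$ against that surviving entry and uses $h_{i_n}\le h_i$, $g_{i_n}\le g_i$. Your extra bookkeeping (identifying the initial entry with $g_0=\delta$, and checking that the incumbent used in the admission test is the one used to record $g_j$) covers points the paper leaves implicit. Both checks hold, since no incumbent update occurs between the successful acceptance test and the recording of $(h_j,g_j)$.
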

\begin{proof}
Consider the filter immediately before the acceptance test for the admitted iterate $j$. The candidate record $j$ has not been inserted yet. We may assume, without any loss of generality, that $(h_i,g_i)\not\in\mathcal{F}_{j-1}$ (otherwise we have nothing to show). In other words, iteration $i$ was removed form the filter at an earlier iteration before $j$. Let $i_1<...<i_n$ be the chain of deleted iterates between $i$ and $j$, meaning that $i_{l}$ was deleted due to dominance of $i_{l+1}$, with $i<i_1$ being the first, and $i_n<j$ being the last one, which is also the only active iterate in this chain, for some $n\geq 1$. By the deletion criterion \eqref{eq:dominance}, we have
\begin{equation}\label{eq:chain1}
    h_i\geq h_{i_1}\geq h_{i_2}\geq ... \geq h_{i_n}\quad \text{and}\quad g_i\geq g_{i_1}\geq g_{i_2}\geq ... \geq g_{i_n}.
\end{equation}
Since $(h_{i_n},g_{i_n})\in \mathcal{F}_{j-1}$ and $(h_j,g_j)$ is accepted by the filter mechanism, by definition, we have 
\begin{equation}\label{eq:chain2}
    h_{j}\leq \beta h_{i_{n}}\quad \text{or}\quad g_j\leq g_{i_{n}}-\gamma h_j.
\end{equation}
The result directly follows from \eqref{eq:chain1} and \eqref{eq:chain2}.
\end{proof}

We next prove the validity of the prescribed node updates, namely we show that every mixed-integer feasible point in the processed node that strictly improves the updated incumbent remains covered by the replacement nodes.

\begin{lemma}\label{lem:replacement}
Suppose that \hyperlink{f1}{(F1)}-\hyperlink{f3}{(F3)} hold and that a prescribed action at iteration $k$ processes a node $N_k$ and produces an incumbent update $U_k^+\leq U_k$ and an open pool update $\Ocal_k^{+}$. Then, there exists a finite (possibly empty) family of nodes $\mathcal S_k$ representing that update such that
\begin{equation}\label{eq:family}
 X(N_k)\cap X_{<U_k^+}\subseteq\bigcup_{M\in\mathcal S_k}X(M)
\end{equation}
and
\begin{equation}\label{eq:pool}
     \Ocal_k^{+}=(\Ocal_k\setminus\{N_k\})\cup\mathcal S_k.
\end{equation}
\end{lemma}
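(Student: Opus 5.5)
The proof is a case analysis over the finitely many prescribed actions that can process a node $N_k$ and change the pool and incumbent. These are: (a) a branch action on $Z_k=N_k$ followed by the child LP solves, the fixed-integer incumbent updates \eqref{eq:update1}--\eqref{eq:update2}, and fathoming; (b) an \hyperlink{or1}{(OR1)} outcome; (c) an \hyperlink{or1}{(OR2)} outcome; and (d) an \hyperlink{or1}{(OR3)} outcome followed by the replacement \eqref{eq:replacement}. In each case I will name $\mathcal S_k$ explicitly, so that \eqref{eq:pool} holds by construction. The substance is then to verify \eqref{eq:family}. Throughout I will use the monotonicity $U_k^+\le U_k$, which gives $X_{<U_k^+}\subseteq X_{<U_k}$. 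Hence it suffices to cover $X(N_k)\cap X_{<U_k^+}$, and any covering valid with respect to $U_k$ can be used directly.

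In case (b), take $\mathcal S_k=\varnothing$ and $U_k^+=U_k$. The certificate $X_{<U_k}(N_k)=\varnothing$ makes the left side of \eqref{eq:family} empty. In case (c), again take $\mathcal S_k=\varnothing$. The oracle returns $x^*_{N_k}$ minimizing $f$ over $X(N_k)$, and the update gives $U_k^+\le v(z^*_{N_k})\le f(x^*_{N_k})$, since $y^*_{N_k}\in Y(z^*_{N_k})$ because $P(N_k)\subseteq P$. Therefore every $x\in X(N_k)$ satisfies $f(x)\ge f(x^*_{N_k})\ge U_k^+$, and the left side of \eqref{eq:family} is empty. In case (d), take $\mathcal S_k=\{Z\}$ with $U_k^+=U_k$. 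The second condition in \eqref{eq:replacement} says exactly that $X(N_k)\cap X_{<U_k}\subseteq X(Z)$.

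Case (a) is the main step. Let $\{N_{k,1},\dots,N_{k,s}\}$ be the children, and let $\mathcal S_k$ be those children that survive all removals. By \hyperlink{f2}{(F2)}, $X(N_k)=\bigcup_j X(N_{k,j})$. So it suffices to show that every removed child $M$ satisfies $X(M)\cap X_{<U_k^+}=\varnothing$, where $U_k^+$ is the incumbent after all updates in the loop and fathoming. I will split the removed children into three kinds.

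An empty relaxation gives $X(M)=\varnothing$ trivially. If $M$ was removed because $x_M\in X$, then $U_k^+\le v(z_M)\le f(x_M)=\min_{P(M)}f\le f(x)$ for every $x\in X(M)$. If $M$ was fathomed by the bound, then $f(x)\ge f(x_M)\ge U_k^+$ on $P(M)$. In each case no point of $X(M)$ strictly improves $U_k^+$.

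The only delicate point is the ordering of the updates. A child that was processed when the incumbent was larger still works, because the final $U_k^+$ is no larger than the value used when that child was discarded. I will handle this by always comparing against the final $U_k^+$ and invoking the monotonicity once. Finiteness of $\mathcal S_k$ follows from $s<\infty$.
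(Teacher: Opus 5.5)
\textbf{Verdict: there is a gap in case (a).} Your restoration cases (b)--(d) coincide with the paper's third case. Your bound arguments for discarded children are correct. In case (c) and for integral children, the inequality $U_k^+\le v(z)\le f(x)$ (via $P(N_k)\subseteq P$) is exactly what a bundled action needs.

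The problem is your claim that \eqref{eq:pool} holds ``by construction'' with $\mathcal S_k$ equal to the surviving children. The fathoming step of Algorithm~\ref{alg:bf} reads ``fathom all nodes whose optimal relaxed LP value is at least $U_k$''. It runs over the whole pool, not only over the children of $Z_k$, so older open nodes can be removed there. This happens in two ways:
\begin{itemize}
\item a child lowers the incumbent during the loop; or
\item an (OR2) outcome, or a recovery step, lowered $U_k$ at the previous iteration. Algorithm~\ref{alg:rp} deletes only the processed node and never fathoms the rest of the pool, so stale nodes with $f(x_N)\ge U_k$ can survive into iteration $k+1$.
\end{itemize}
In either situation your composite action yields $\mathcal O_k^+=(\mathcal O_k\setminus(\{Z_k\}\cup F))\cup\mathcal S_k$ with some nonempty set $F$ of non-child nodes, so \eqref{eq:pool} fails as stated. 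These removals are also not represented by any instance of the lemma. That matters, because the proof of Lemma~\ref{lem:coverage} follows a point through precisely the action that removed its containing node.

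\textbf{The repair is easy.} Treat each such removal as its own single-node action with empty family. Your argument $f(x)\ge f(x_M)\ge U_k^+$ on $P(M)$ applies verbatim. Equivalently, restrict (a) to the children and add a separate ``fathomed by bound'' case valid for any open node.

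\textbf{Comparison with the paper.} The repaired argument is essentially the paper's decomposition. The paper keeps every action single-node:
\begin{itemize}
\item the bare branch action, with $\mathcal S_k$ equal to all children, $U_k^+=U_k$, and the inclusion coming from (F2);
\item each subsequent node processing as a separate action: empty relaxation, bound-fathomed, fractional and kept with $\mathcal S_k=\{N_k\}$, or integral and closed after the fixed-integer update.
\end{itemize}
The paper states its bound-fathoming subcase for children, but the argument applies unchanged to any node. With this decomposition \eqref{eq:pool} is literally true for every action, and the order of incumbent updates never has to be discussed. Lemma~\ref{lem:coverage} chains the atomic steps using only monotonicity of the incumbent.

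Your bundling replaces that chaining with a single ``compare against the final $U_k^+$'' argument. That argument is correct, but it only goes through once the non-child fathoming is split off as above.
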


\begin{proof}
    We take cases for all possible nodes to which the action is prescribed at outer iteration $k$, and we construct a family for each one:

    \underline{\textit{1st case}}: The first case has to do with an initial action on $N_k=Z_k$. In this case, the algorithm performs a valid branch process on $N_k$ with child nodes $N_{k,1},...,N_{k,s}$ for some $s\geq 1$. Then, the incumbent is not updated (so $U_k^{+}=U_k$) and \eqref{eq:pool} holds for $\mathcal{S}_k:=\{N_{k,1},...,N_{k,s}\}$, since $N_k$ is removed after the branching procedure. The inclusion \eqref{eq:family} holds due to \hyperlink{f2}{(F2)}. 

    \underline{\textit{2nd case}}: After a branch action has been applied to $Z_k$, the algorithm next solves the relaxed LPs of the child nodes generated from the latter action. So, in this case, let $N_k$ simply be one of these child nodes. We take various subcases, depending on the outcome of this procedure:
    \begin{itemize}
        \item[--] Suppose that $P(N_k)=\varnothing$. This scenario is trivial, since $P(N_k)=\varnothing$ directly implies $X(N_k)=\varnothing$. The algorithm immediately fathoms the node and removes it from the open pool, so \eqref{eq:family}-\eqref{eq:pool} hold with $U_k^{+}=U_k$ and $\mathcal{S}_k=\varnothing$.
        
        \item[--] Suppose that $f(x_{N_k})\geq U_k$. In that case we have $f(x)\geq f(x_{N_k})\geq U_k\geq U_k^{+}$ for every $x\in X(N_k)$, so $X(N_k)\cap X_{<U_k^+}=\varnothing$. The node $N_k$ is fathomed, and \eqref{eq:family}-\eqref{eq:pool} hold for an empty family $\mathcal{S}_k$.

        \item[--] Suppose that $f(x_{N_k})<U_k$ and $x_{N_k}\not\in X(N_k)$. The incumbent cannot be updated since the optimal solution of the relaxed LP is not integral, so $U_k^{+}=U_k$. Still, $N_k$ remains in the open pool $\mathcal{O}_k$, i.e., the desired properties hold with $\mathcal{S}_k=\{N_k\}$.

        \item[--] If $f(x_{N_k})<U_k$ and $x_{N_k}\in X(N_k)$, then the incumbent is updated as in \eqref{eq:update1}-\eqref{eq:update2}. In particular, we have $f(x)\geq f(x_{N_k})\geq v(z_{N_k})=U_k^{+}$ for every $x\in X(N_k)$. Hence, no point in $X(N_k)$ can strictly improve $U_k^+$, and we may close the node with $\mathcal S_k=\varnothing$. 
    \end{itemize}

    \underline{\textit{3rd case}} The final case reduces to the final incumbent and pool update after the restoration phase has terminated. Outcome \hyperlink{or2}{(OR2)} is handled analogously to the previous case above. For outcome \hyperlink{or1}{(OR1)}, its certificate states $X(N_k)\cap X_{<U_k}=\varnothing$. Since $U_k^+\le U_k$, the set $X_{<U_k^+}$ is a subset of $X_{<U_k}$, including when the earlier incumbent is infinite. Consequently the empty family is valid here as well. If a filter-admissible point is returned instead, i.e., the outcome is \hyperlink{or3}{(OR3)}, then the replacement condition \eqref{eq:replacement} directly gives the desired properties with $\mathcal{S}_k=\{Z\}$.
\end{proof}

Our next task is to show that every globally feasible point that can improve the incumbent is still represented by at least one open node.

\begin{lemma}\label{lem:coverage}
    Suppose \hyperlink{f1}{(F1)}-\hyperlink{f3}{(F3)} hold. Then, for every $k\geq 0$, we have
    \begin{equation}\label{eq:cont}
        X_{<U_k}\subseteq \bigcup_{N\in \mathcal{O}_k}X(N).
    \end{equation}
\end{lemma}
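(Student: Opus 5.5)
We argue by induction over the sequence of elementary pool/incumbent updates performed by Algorithms \ref{alg:bf} and \ref{alg:rp}, rather than over outer iterations alone. Each outer iteration is a finite chain of such updates: the branch at $Z_k$, the processing of each child (including the fathoming test against $U_k$), the possibly repeated restoration-oracle calls, and the final replacement by $Z$. The chain is finite by \hyperlink{f2}{(F2)}–\hyperlink{f3}{(F3)}. Lemma \ref{lem:replacement} applies to every such elementary action on a node $N$ with pre-state $(U,\Ocal)$ and post-state $(U^+,\Ocal^+)$. So it suffices to show that the invariant $X_{<U}\subseteq\bigcup_{N\in\Ocal}X(N)$ is preserved by one action. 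Since $U_{k+1}=U_k$ and $\Ocal_{k+1}=\Ocal_k$ at the end of iteration $k$, the statement for every $k$ then follows.

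\emph{Base case.} At $k=0$ we have $U_0=+\infty$ and $\Ocal_0=\{N^{\mathrm{root}}\}$ with $P(N^{\mathrm{root}})=P$. Hence $X_{<U_0}=X=X(N^{\mathrm{root}})$, and \eqref{eq:cont} holds with equality.

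\emph{Inductive step.} Assume $X_{<U}\subseteq\bigcup_{N\in\Ocal}X(N)$, and let an action process $N_k\in\Ocal$. Take $x\in X_{<U^+}$. Since $U^+\le U$, we have $X_{<U^+}\subseteq X_{<U}$; this also holds when $U=+\infty$, because then $X_{<U}=X$. By the hypothesis, $x\in X(N)$ for some $N\in\Ocal$.
\begin{itemize}
\item If $N\neq N_k$, then $N\in\Ocal\setminus\{N_k\}\subseteq\Ocal^+$ by \eqref{eq:pool}.
\item If $N=N_k$, then $x\in X(N_k)\cap X_{<U^+}$, and \eqref{eq:family} gives some $M\in\mathcal S_k\subseteq\Ocal^+$ with $x\in X(M)$.
\end{itemize}
In both cases $x$ is covered by $\Ocal^+$, which closes the induction.

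\emph{Main obstacle.} The delicate point is making sure that every modification of the pool in the pseudocode matches one of the cases in Lemma \ref{lem:replacement}. Two operations need care.

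First, the bulk step ``fathom all nodes whose optimal relaxed LP value is at least $U_k$'' may remove nodes other than the one just processed. We treat it as a sequence of single-node actions, one per removed node. Each is the case $f(x_N)\ge U_k$ with $\mathcal S=\varnothing$, which Lemma \ref{lem:replacement} covers.

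Second, an incumbent update produced by \eqref{eq:update1}–\eqref{eq:update2} at one node lowers $U$ for all other nodes simultaneously. This is harmless because $X_{<U}$ is monotone in $U$, as used above.

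Finally, the early termination tests only stop the algorithm and do not alter the pool, so they do not affect the invariant.
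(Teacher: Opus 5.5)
Your proof is correct and follows essentially the same route as the paper. Both arguments push the coverage invariant through each pool/incumbent update using Lemma~\ref{lem:replacement} and the monotonicity $X_{<U^+}\subseteq X_{<U}$. The difference is only bookkeeping: you induct directly over elementary actions, which also makes the bulk fathoming step and the end-of-iteration snapshot of $(U_k,\Ocal_k)$ explicit, whereas the paper inducts on $k$ and covers the intermediate updates with its ``same argument applied repeatedly'' step.
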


\begin{proof}
    We proceed by induction. The incumbent $U_k$ and open pool $\Ocal_k$ represent their final updated forms during iteration $k$, right before acceptance of $x_{k+1}$ to the filter. At initialization, the root node covers $X$, so the desired inclusion holds. Each subsequent node update preserves this inclusion by Lemma \ref{lem:replacement} and monotonicity of the incumbent. Applying this observation successively to the updates preceding the first admission establishes \eqref{eq:cont} for $k=0$. The same inclusion also holds if a global termination test is reached before that admission. Next, suppose \eqref{eq:cont} is true for $k\geq 0$, and let $x\in X_{<U_{k+1}}$. Monotonicity of the sequence of incumbent parameters gives $X_{<U_{k+1}}\subseteq X_{<U_k}$, so $x\in X(N)$ for some $N\in \mathcal{O}_k$ by the inductive hypothesis. We may assume without loss of generality that $N\not\in \mathcal{O}_{k+1}$. This means that a prescribed node update resulted in removal of $N$ from the open pool, an update $U_{k+1}\leq U_k^{+}\leq U_k$ on the incumbent, a construction of a family $\mathcal{S}_k$ and an update $\mathcal{O}_k^{+}$ on the pool $\mathcal{O}_k$, so that
    \begin{equation}\label{eq:inc}
        x\in X(N)\cap X_{<U_{k+1}} \subseteq X(N)\cap X_{<U_k^{+}}\subseteq \bigcup_{M\in\mathcal S_k}X(M)
    \end{equation}
   by Lemma 3.2 and monotonicity of the incumbent sequence. From \eqref{eq:pool} and \eqref{eq:inc} we deduce that $x\in X(M)$ for some $M\in \mathcal{O}_k^{+}$. If $M\in \mathcal{O}_{k+1}$ we have nothing to show, thus we assume that $\mathcal{O}_k^{+}$ does not represent the final pool update and that $M\not\in \mathcal{O}_{k+1}$. Since only finitely many nodes can be removed, and the open pool can be updated only a finite number of times between the two snapchots, the same argument applied repeatedly (invoking Lemma \ref{lem:replacement}) implies that the point $x$ will eventually belong to the mixed-integer feasibility set of some node member of $\mathcal{O}_{k+1}$. This completes the induction.  
\end{proof}

Next, we show that the funnel set is always well-defined and that the best-bound relaxed node has a local optimal value controlled by the global optimal one.

\begin{lemma} \label{lem:ellbound}
Suppose \hyperlink{f1}{(F1)}-\hyperlink{f3}{(F3)} hold and let an iterate $k\geq 0$ that does not terminate Algorithm \ref{alg:bf}. Then, $f_{\text{low}}\le\ell_k\le f(x)$ for every $x\in\bigcup_{N\in\Ocal_k}P(N)$. In addition, if $X\neq \varnothing$, then $\ell_k\leq z^*$.
\end{lemma}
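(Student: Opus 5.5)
The plan is to argue directly from the definition \eqref{eq:low} of $\ell_k$, using \hyperlink{f1}{(F1)} for the lower bound, and Lemma \ref{lem:coverage} together with the global termination test of Algorithm \ref{alg:bf} for the comparison with $z^*$. Throughout, $\Ocal_k$ and $U_k$ denote the pool and incumbent at the moment $\ell_k$ is (re)computed. Since iteration $k$ does not terminate, $\Ocal_k\neq\varnothing$. The pool is also finite, because by \hyperlink{f2}{(F2)} each branch action creates finitely many children and every other update in Lemma \ref{lem:replacement} replaces a node by at most one node. Every open node has $P(N)\neq\varnothing$, since empty relaxations are fathomed.

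\textbf{Lower bound.} For each $N\in\Ocal_k$ we have $\varnothing\neq P(N)\subseteq P$. By \hyperlink{f1}{(F1)} and the remark following it, $f$ attains its infimum on $P(N)$, so $x_N$ in \eqref{eq:lpoptimal} exists. Moreover $f(x_N)=\min_{P(N)}f\ge\min_P f=f_{\text{low}}$. Taking the minimum over the finite pool gives $\ell_k\ge f_{\text{low}}$, and the minimum is attained.

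\textbf{Upper bound over the relaxations.} If $x\in\bigcup_{N\in\Ocal_k}P(N)$, pick $N$ with $x\in P(N)$. Optimality of $x_N$ gives $f(x)\ge f(x_N)\ge\ell_k$. The one point to check is that for a replacement node $Z$ from \eqref{eq:replacement}, the stored ``certified LP value'' is indeed $\min_{P(Z)}f$. The algorithm stores exactly this value, so the argument is unchanged.

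\textbf{Comparison with $z^*$.} Assume $X\neq\varnothing$. Then $z^*=\inf_X f\ge f_{\text{low}}$ is finite. I will avoid needing attainment of $z^*$ and work with infima, splitting into three cases.
\begin{itemize}
\item[(i)] $U_k=+\infty$. Then $X_{<U_k}=X$. By Lemma \ref{lem:coverage}, every $x\in X$ lies in some $X(N)\subseteq P(N)$ with $N\in\Ocal_k$. By the previous step $\ell_k\le f(x)$, and taking the infimum over $X$ gives $\ell_k\le z^*$.
\item[(ii)] $U_k<+\infty$ and $z^*<U_k$. The same argument applies to every $x\in X$ with $f(x)<U_k$. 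Such points have infimum $z^*$, so again $\ell_k\le z^*$.
\item[(iii)] $U_k<+\infty$ and $z^*\ge U_k$. Since $U_k$ is the value of a stored feasible point, $U_k\ge z^*$, hence $U_k=z^*$. Because iteration $k$ does not terminate, the test ``$U_k<+\infty$ and $U_k\le\ell_k$'' failed, so $\ell_k<U_k=z^*$.
\end{itemize}

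\textbf{Main obstacle.} The delicate point is bookkeeping: the termination test and Lemma \ref{lem:coverage} must refer to the same snapshot $(U_k,\Ocal_k)$ at which $\ell_k$ is computed. This matters in particular after the restoration phase, where $\ell_k$ is recomputed and the test is re-run at the beginning of Algorithm \ref{alg:rp}. I will handle this by noting that every computation of $\ell_k$ in Algorithms \ref{alg:bf} and \ref{alg:rp} is immediately followed by the test. Lemma \ref{lem:coverage} holds for the pool at that moment by the same inductive argument, which is applied to all intermediate updates via Lemma \ref{lem:replacement}.
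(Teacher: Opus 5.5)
Your proof is correct and is essentially the paper's argument: $f_{\text{low}}\le\ell_k\le f(x)$ follows from $P(N)\subseteq P$ and LP-optimality of $x_N$, then Lemma \ref{lem:coverage} handles $U_k>z^*$ and the failed global test handles $U_k=z^*$, with your split into $U_k=+\infty$ and $z^*<U_k<+\infty$ (working with infima) replacing the paper's $\epsilon$-approximating feasible points. One small correction to your bookkeeping remark: the recomputation of $\ell_k$ in Algorithm \ref{alg:bf} after restoration is not itself followed by the test, but an \hyperlink{or3}{(OR3)} return changes neither $U_k$ nor $\Ocal_k$, so it reproduces the value already tested at the start of the last call of Algorithm \ref{alg:rp} and your conclusion stands.
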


\begin{proof}
Since (BF) does not terminate at iteration $k$, we must have $\Ocal_k\neq \varnothing$. For the first result, select any $x\in P(N)$ with $N\in\Ocal_k$. Then,
\begin{align}\label{eq:relax}
 f_{\text{low}}\le\ell_k
 &=\min_{M\in\Ocal_k}f(x_M)
 \le f(x_N)
 \le f(x).
\end{align}
If $x$ is a convex combination of points in the union, then \eqref{eq:relax} still holds due to linearity of $f$.

Now, assume that $X\ne\varnothing$. In that case, $z^*$ is well-defined and finite, since $f_{\text{low}}$ is a lower bound and any feasible point gives a finite upper bound. A finite incumbent is feasible, so $U_k\ge z^*$. First suppose $U_k>z^*$, allowing $U_k=+\infty$. For every sufficiently small $\epsilon>0$, there exists $x_\epsilon\in X$ with $f(x_\epsilon)<z^*+\epsilon\leq U_k$. Lemma 3.3 yields $x_\epsilon\in X(N)$ for some open node $N\in\mathcal{O}_k$, so the first part \eqref{eq:relax} gives $\ell_k\le f(x_\epsilon)<z^*+\epsilon$. Since $\epsilon$ was arbitrary, we get $\ell_k\leq z^*$. The remaining case is $U_k=z^*<+\infty$.
Due to the fact that (BF) does not terminate at $k$, all global tests must fail during that outer iteration, meaning that $U_k>\ell_k$. The result follows.
\end{proof}

\begin{remark}\label{rem:tests}
The global tests can now be rigorously justified. An empty pool and Lemma \ref{lem:coverage} imply $X=\varnothing$ when $U_k=+\infty$, or the absence of a point better than the feasible incumbent when $U_k$ is finite, which leads to termination with an optimal solution. For a nonempty pool with $U_k\le\ell_k$, a point $x\in X$ with $f(x)<U_k$ would belong to an open relaxation by Lemma \ref{lem:coverage} and would contradict \eqref{eq:relax}. Consequently, this test also proves global optimality.
\end{remark}

We are finally ready to present the main result of this work.

\begin{theorem}\label{th:main}
Suppose that \hyperlink{f1}{(F1)}-\hyperlink{f3}{(F3)} hold. Then, Algorithm~\ref{alg:bf} has exactly one of the following outcomes:
\begin{enumerate}
\item finite termination with a valid certificate that $X=\varnothing$;
\item finite termination with a mixed-integer optimal solution $x^*\in X^*$;
\item an infinite sequence $\{x_k\}\subseteq P$ with $h(x_k)\to 0$ and $f(x_k)\to z^*$.
\end{enumerate}
Moreover, in the third case, $\dist_2(x_k,X)\to0$ and every limit point of $\{x_k\}$ lies in $X^*$.
\end{theorem}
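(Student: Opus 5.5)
We split according to whether Algorithm~\ref{alg:bf} stops. Each outer iteration is finite: the branching and fathoming steps are finite by \hyperlink{f2}{(F2)}. The restoration phase either calls the oracle, which finishes in finitely many steps by \hyperlink{f3}{(F3)}, or removes a node from the finite pool, which can happen only finitely often within one iteration. Hence the algorithm either terminates at some finite $k$ or produces an infinite admitted sequence $\{x_k\}$. Termination happens only through the global tests, and Remark~\ref{rem:tests} together with Lemma~\ref{lem:coverage} shows these give outcome 1 when $U_k=+\infty$ and outcome 2 otherwise. The three outcomes are mutually exclusive by construction. It remains to analyze the infinite case.

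\textbf{Step 1: $h(x_k)\to0$.} Suppose not. Then there are $\epsilon>0$ and an infinite index set $K$ with $h_k\ge\epsilon$ for $k\in K$. For $i<j$ in $K$, Lemma~\ref{lem:historical} gives $h_j\le\beta h_i$ or $g_j\le g_i-\gamma\epsilon$. The difficulty is that $g_k$ uses the incumbent $U_{k-1}$, which changes over time. However, $U$ is monotone, and the finite values it takes are objective values of feasible points. We handle this by tracking the quantities $g_i$ as stored in the filter; Lemma~\ref{lem:historical} is already stated in those terms. Eligibility ($x\in E(U)$) gives $g\ge0$, and $g\le\max\{\delta, U_{\text{first}}-f_{\text{low}}\}$ is bounded above because $f(x)\ge f_{\text{low}}$. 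So the pairs $(h_k,g_k)$, $k\in K$, lie in the compact box $[\epsilon,1/2]\times[0,G]$. Take a convergent subsequence and two indices $i<j$ with $|h_i-h_j|$ and $|g_i-g_j|$ both below $\min\{(1-\beta)\epsilon,\gamma\epsilon\}/2$. Then neither $h_j\le\beta h_i$ nor $g_j\le g_i-\gamma\epsilon$ can hold, a contradiction. This slanting-envelope argument is standard.

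\textbf{Step 2: $f(x_k)\to z^*$.} Every admitted $x_{k+1}$ lies in $\mathcal G_k$, either by the filter-funnel test or by \hyperlink{or3}{(OR3)}. Hence $\ell_k\le f(x_{k+1})\le\ell_k+\omega(h_{k+1})$, where the lower bound comes from Lemma~\ref{lem:ellbound}. By Step 1 and continuity of $\omega$ with $\omega(0)=0$, we get $f(x_{k+1})-\ell_k\to0$. Lemma~\ref{lem:ellbound} also gives $\ell_k\le z^*$ when $X\neq\varnothing$, so $\limsup f(x_k)\le z^*$. For the reverse inequality we need $\liminf\ell_k\ge z^*$, and this is the main obstacle. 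The plan is as follows. The sequence $\ell_k$ is nondecreasing in spirit, since branching and replacement only shrink relaxations. The infinite run means $U_k>\ell_k$ always holds. I would argue that $f(x_{k+1})\le U_k$ by eligibility, and use a limit point $\bar x$ (when one exists) together with $h(\bar x)=0$ and closedness of $P$ to get $\bar x\in X$, hence $f(\bar x)\ge z^*$. Without a limit point, one needs $X\neq\varnothing$. I would obtain it by noting that if $X=\varnothing$ then $U_k=+\infty$ throughout and eventually the points have $h\to0$ inside $P$. Then a rationality argument applies: $P$ is rational and integer-free points of a rational polyhedron stay a positive distance (in $z$) from $\mathbb Z^p$ on each face, via Meyer's theorem that $\mathrm{conv}(X)$ is a rational polyhedron. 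This should force $X\neq\varnothing$ and a uniform bound $f(x)\ge z^*-\mathcal O(h(x))$ on $P$-points near-integral in $z$. I expect precisely this Hoffman/Meyer-type proximity bound to be the crux, and would try it first.

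\textbf{Step 3: the final claims.} The same proximity estimate gives $\dist_2(x_k,X)\to0$: fix $z$ to the nearest integer vector and apply Hoffman's bound to the fiber $Y(z)$. Any limit point $\bar x$ lies in $P$ by closedness and satisfies $h(\bar x)=0$ and $f(\bar x)=z^*$ by continuity, so $\bar x\in X^*$.
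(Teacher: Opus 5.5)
Your Step 1 is fine. It replaces the paper's telescoping of $g_{k_{j+1}}\le g_{k_j}-\gamma\bar h/2$ with a compactness argument on $[\epsilon,1/2]\times[0,G]$. Your per-run upper bound on $g$ is correct, though the telescoping only needs $g\ge 0$.

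The genuine gap is in Step 2, and Step 3 inherits it. Everything after $\limsup_k f(x_k)\le z^*$ is a plan rather than a proof. Even that inequality presupposes $X\neq\varnothing$, so that $z^*$ is finite and Lemma~\ref{lem:ellbound} gives $\ell_k\le z^*$, and you never establish this. Your limit-point route does handle the last sentence of the theorem, since a limit point already lies in $X$. But it controls only a convergent subsequence, so it cannot give $\liminf_k f(x_k)\ge z^*$ for the whole sequence. Moreover, no limit point need exist, because $P$ may be unbounded. Meyer's theorem is the wrong tool for nonemptiness: when $X=\varnothing$ it only says $\mathrm{conv}(X)=\varnothing$, and is silent on how close points of $P$ can come to $\mathbb{Z}^p$ in the $z$-coordinates. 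Finally, the Hoffman repair in your Step 3 requires the fiber $Y(m_k)$ to be nonempty, which is exactly the missing fact.

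The missing idea is a lattice-separation estimate in the projected space.
\begin{itemize}
\item Let $Q:=\proj_z P$. It is a rational polyhedron, so after discarding zero rows and scaling, $Q=\{z:Bz\le d\}$ with $B$, $d$ integral.
\item If $m\in\mathbb{Z}^p\setminus Q$, some row has $B_im-d_i\ge 1$, since it is a positive integer. For every $z\in Q$ this gives $1\le B_i(m-z)\le\norm{B_i}_1\norm{m-z}_\infty$. Hence $\dist_\infty(m,Q)\ge\min_i 1/\norm{B_i}_1>0$.
\item Since $z_k\in Q$ and the nearest integer vector $m_k$ satisfies $\norm{m_k-z_k}_\infty=h_k\to 0$, for large $k$ this forces $m_k\in Q$, i.e.\ $Y(m_k)\neq\varnothing$. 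This gives $X\neq\varnothing$.
\item Because $A_zz_k+A_yy_k\le b$, the residual of $y_k$ in $A_y\widetilde y\le b-A_zm_k$ is at most $\norm{A_z}_\infty h_k$. Hoffman's bound then produces $\widetilde x_k=(m_k,\widetilde y_k)\in X$ with $z^*\le f(\widetilde x_k)\le f(x_k)+C^*h_k$, for a constant $C^*$ depending only on the data.
\end{itemize}
This yields $\liminf_k f(x_k)\ge z^*$ and $\dist_2(x_k,X)\to 0$ directly. You never need $\liminf_k\ell_k\ge z^*$ as an intermediate step; it follows afterwards from $f(x_{k+1})-\ell_k\to 0$.
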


\begin{proof}
We start by examining the possible outcomes of the initialization phase. If the original relaxation LP gives $P=\varnothing$, then the algorithm terminates with a valid certificate of infeasibility. If, on the other hand, we get a solution $x_0\in\argmin_{x\in P}f(x)$ such that $x_0\in X$, then the algorithm terminates with $x_0\in X^*$. Otherwise, root branching sets the start of the algorithm.

Next, suppose that there exists $k^*\geq 1$ such that $\mathcal{O}_{k^*}=\varnothing$. An infinite incumbent directly indicates global infeasibility. Recall that the global tests run are valid (Remark \ref{rem:tests}), so $U_{k^*}<+\infty$ yields a global optimality certificate. The mixed-integer solution $x^*$ such that $f(x^*)=U_{k^*}$, the latter of which is always computed by construction, must belong to $X^*$.

If the open pool remains non-empty at the end of every single iteration, meaning that all previously conducted optimality tests have failed, then the algorithm generates an infinite sequence $\{x_k\}$ in the relaxed polyhedron $P$. Suppose, by contradiction, that $h_k\not\to0$. Then, there exists $\varepsilon>0$ such that $h_k\geq \varepsilon$ for infinitely many iterates $k\geq 0$. By compactness of $[\varepsilon,1/2]$, we can find a subsequence $\{h_{k_j}\}$ of $\{h_k\}$ such that
\begin{equation}
    h_{k_j}\longrightarrow\bar h
\end{equation}
for some $\bar{h}\in [\varepsilon,1/2]$.
Because $\beta<1$, we have
\begin{equation}
    h_{k_{j+1}}-\beta h_{k_j}\to(1-\beta)\bar h>0.
\end{equation}
Thus, there exists $j_0\geq 0$ such that
$h_{k_{j+1}}>\beta h_{k_j}$ and
$h_{k_{j+1}}\ge\bar h/2$ for every $j\geq j_0$.
Lemma \ref{lem:historical} yields
\begin{equation}\label{eq:tech1}
    g_{k_{j+1}}
    \le g_{k_j}-\gamma h_{k_{j+1}}
    \le g_{k_j}-\frac{\gamma\bar h}{2}\quad \forall j\geq j_0.
\end{equation}
In turn, for any fixed $J>j_0$, \eqref{eq:tech1} gives
\begin{align}
    (J-j_0)\frac{\gamma\bar h}{2}
    &\le
    \sum_{j=j_0}^{J-1}
    \bigl(g_{k_j}-g_{k_{j+1}}\bigr) \notag\\
    &=g_{k_{j_0}}-g_{k_J} \nonumber \\
    &\le g_{k_{j_0}},\label{eq:tech2}
\end{align}
where we used non-negativity of $g$ over the space of all filter-admissible iterates. Letting $J\to\infty$ in \eqref{eq:tech2} contradicts finiteness of $g_{k_{j_0}}$.
Consequently, integrality violation converges to zero.

Next, we prove global optimality. In order to guarantee that the objective values will converge to $z^*$, we must first show that the problem \eqref{eq:milp} is consistent so that $z^*$ is well-defined. To this end, let $Q:=\proj_z P$ be the projection of the relaxed polyhedron on the space of integer variables. Projection of a rational polyhedron is also a rational polyhedron (e.g., see \citep{Schrijver1986}). Therefore, we can write $Q=\{z\in \mathbb{R}^{p}:Bz\le d\}$ for some $B\in \mathbb{Q}^{s\times p}$, $b\in \mathbb{Q}^{s}$, $s\geq 0$. In fact, by removing redundant zero rows, and scaling the remaining finitely many inequalities, we may assume that $B$ and $d$ are integral. We define the integer-recovery measure
\begin{equation}\label{eq:measure}
    \mu_Q:=\inf\left\{ \text{dist}_{\infty}(m,Q):m\in \mathbb{Z}^p\setminus Q \right\}.
\end{equation}
 (If $Q=\mathbb{R}^p$, we simply set $\mu_Q:=+\infty$.) Since $B$, $d$ have integer entries, given $m\in\Z^p\setminus Q$, we can find some row $i$ that satisfies $B_i m-d_i\ge1$. For every $z\in Q$, H\"older's inequality gives
\begin{align}\label{eq:lattice-separation}
 1\le B_i m-d_i\le B_i(m-z)
 \le\norm{B_i}_1\norm{m-z}_\infty.
\end{align}
Since $m\in \mathbb{Z}^{p}\setminus Q$ was arbitrary, we obtain
\begin{equation}\label{eq:rho}
 \mu_Q\geq \min_i\frac1{\norm{B_i}_1}>0.
\end{equation}
Therefore, we must have $h_k<\mu_Q$ for all $k$ sufficiently large. Let us fix an iterate $k$ sufficiently large, and write $x_k=(z_k,y_k)\in \mathbb{R}^{p}\times \mathbb{R}^{q}$. In turn, choose a nearest integer vector $m_k$ to $z_k$. Since $z_k\in Q$, the inequalities $h(x_k)<\mu_Q$ and $m_k\notin Q$ would imply
\begin{equation}\label{eq:round-contradiction}
 \mu_Q\le\dist_\infty(m_k,Q)\le\norm{m_k-z_k}_\infty=h(x_k)<\mu_Q,
\end{equation}
a contradiction. Thus, $m_k\in Q$ for every sufficiently large $k$. Equivalently, $Y(m_k)\ne\varnothing$ (see \eqref{eq:update2}), which directly implies $X\ne\varnothing$ and $f_{\text{low}}\le z^*<+\infty$.

Since $X\neq \varnothing$ and the algorithm generates an infinite sequence, Lemma \ref{lem:ellbound} gives $\ell_k\leq z^*$ for every $k\geq 0$. Further, because $x_{k+1}\in \mathcal{G}_k$ at admission, we have
\begin{equation}\label{eq:funnelproperty}
    f_{k+1}-\ell_k\leq \omega(h_{k+1})
\end{equation}
at the end of every iteration $k\geq 0$. Therefore,
\begin{equation}\label{eq:tech3}
    f_{k+1}\leq \ell_k+\omega(h_{k+1})\leq z^*+\omega(h_{k+1}).
\end{equation}
As $h_k\to 0$, continuity of the funnel function $\omega$ and \eqref{eq:tech3} imply
\begin{equation}\label{eq:limsup}
    \lim\sup_{k}f_k\leq z^*.
\end{equation}
Next, we apply Hoffman's error bound \citep{Hoffman1952} to the fixed matrix $A_y\in \mathbb{Q}^{m\times q}$. More specifically, there exists a constant $C_y<+\infty$, depending on $A_y$ (but independent of $k$), such that
\begin{equation}\label{eq:hoffman}
 \dist_2(y_k,Y(m_k))
 \le C_y\norm{\pos{A_y y_k-b+A_zm_k}}_\infty
\end{equation}
for all $k$ sufficiently large. Since $A_z z_k+A_y y_k\le b$, H\"older's inequality gives
\begin{align}\label{eq:holder}
 \norm{\pos{A_y y_k-b+A_zm_k}}_\infty
 \le\norm{A_z(m_k-z_k)}_\infty
 \le\norm{A_z}_{\infty}h(x_k).
\end{align}
Since the continuous fiber $Y(m_k)$ is a nonempty closed polyhedron for all $k$ sufficiently large, the continuous component $y_k$ of $x_k$ has a nearest Euclidean point $\widetilde y_k\in Y(m_k)$. Hence, $\widetilde x_k:=(m_k,\widetilde y_k)\in X$ for all $k$ sufficiently large, and \eqref{eq:hoffman}-\eqref{eq:holder} yield
\begin{equation}\label{eq:repair-constant}
 \norm{\widetilde y_k-y_k}_2\le C_y\norm{A_z}_{\infty}h(x_k).
\end{equation}
Feasibility of $\widetilde{x}_k$ and linearity of the objective function imply
\begin{align}\label{eq:tech4}
 z^*&\le f(\widetilde x_k) \nonumber \\
 &= f(x_k)+c_z^\top(m_k-z_k)+c_y^\top(\widetilde y_k-y_k)\notag\\
 &\le f(x_k)+C^* h(x_k),
\end{align}
where
\begin{equation}\label{eq:constant}
  C^*:=\norm{c_z}_1+C_y\norm{c_y}_2\norm{A_z}_{\infty}<+\infty.
\end{equation}
Asymptotic feasibility along with \eqref{eq:tech4} yield
\begin{equation}\label{eq:liminf}
    z^*\leq \lim \inf_{k}f(x_k).
\end{equation}
By combining \eqref{eq:limsup} and \eqref{eq:liminf} we finally obtain $f_k\to z^*$.

Moving to the final part, observe that the nearby feasible points above satisfy
\begin{equation}\label{eq:distance-X}
 \dist_2(x_k,X)\le\norm{x_k-\widetilde x_k}_2
 \le\sqrt{p+C_y^2\norm{A_z}_{\infty}^2}\,h(x_k)\longrightarrow0.
\end{equation}
Lastly, if a subsequence $\{x_{k_j}\}$ of $\{x_k\}$ converges to some point $x^*$, then closedness of $P$ and continuity of $h$ imply $x^*\in X$. Continuity of $f$ gives $f(x^*)=z^*$, so $x^*\in X^*$. 
\end{proof}

\begin{remark}\label{rem:f1}
    It is noteworthy that, although the filter is utilizing the incumbent gap function $g$, the objective function $f$ also shares the same optimality guarantees: The same telescopic argument \eqref{eq:tech2} works with $f$ in place of $g$ and implies asymptotic feasibility, where the nonnegative property of $g$ is now replaced by the global lower bound $f\geq f_{\text{low}}$ due to (F1) (also see \cite[Lemma 1]{FletcherLeyfferToint2002}).
\end{remark}

\subsection{The filter as a feasibility mechanism}
\label{subsec:feasibility}

A significant characteristic of the developed algorithm is the separation between the filter and funnel mechanisms. We have already showed that the filter guarantees asymptotic feasibility, whereas the funnel function enforces optimality in the limit. Under minor modifications in the introduced algorithm, the funnel mechanism can be removed completely, without affecting the filter's primary role. In other words, our algorithm can be simplified with the aim of determining feasibility for \eqref{eq:milp}. To be more specific, during the first step of each outer iteration $k\geq 0$, the algorithm now runs all open nodes through the filter, instead of just the ones whose points satisfy the funnel criterion. If no filter-admissible point is found, the algorithm moves to the restoration phase. Suppose that the
funnel constraint is removed from the admissibility set $\Acal_k^{\res}(N)$. Besides looking for a filter-admissible point, the restoration oracle should now search for mixed-integer feasible points instead of node-optimal ones. Thus, the restoration outcomes \hyperlink{or1}{(OR1)} and \hyperlink{or1}{(OR3)} remain unchanged, while outcome \hyperlink{or1}{(OR2)} is now replaced by the much weaker one of returning a feasible solution, which immediately terminates the main algorithm. The restoration mechanism is then closer in spirit to those seen in the literature of filter methods \cite{fletcher2002nonlinear}. Lemmas \ref{lem:historical}-\ref{lem:coverage} are still valid (the latter two of which under minor modifications to account for the different second outcome of the restoration phase; note that $X_{<U_k}$ becomes $X$), and the telescopic argument of Theorem \ref{th:main} still guarantees that every infinite sequence $\{x_k\}$ satisfies $h(x_k)\to0$.  Every accumulation point of the sequence is thus a mixed-integer feasible solution for \eqref{eq:milp}. In fact, if $h(x_k)\to 0$, feasibility can be retrieved in finitely many (outer) iterations by following the feasibility recovery procedure described in Section \ref{sec:complexity}.

\section{Iteration complexity}\label{sec:complexity}

In this section, we provide an explicit control bound on the number of admissions across permitted executions, under a mild assumption and with a simple recovery procedure invoked within every outer step. We first need a standard notion of approximate optimality (e.g., see \cite{testa2019distributed}):
\begin{definition}\label{def:optimal}
    Suppose $X^*\neq \varnothing$ and let $\varepsilon\in (0,1)$. A feasible solution $x\in X$ of \eqref{eq:milp} is called \textit{$\varepsilon$-optimal} if it satisfies $f(x)-z^*\leq \varepsilon$.
\end{definition}
In order for the main algorithm to return an approximate optimal solution, it needs to recover a mixed-integer feasible point whenever the algorithm converges towards optimality along a fractional trajectory. This can be done by calling Algorithm \ref{alg:recovery} (described below) at the end of every outer iteration, right after construction of node $Z$ and before the final update of the algorithmic quantities. At a conceptual level, the introduced recovery procedure rounds the integer coordinates to a nearest integer vector (by component-wise nearest rounding) and optimizes the continuous coordinates by solving the corresponding fixed-integer LP over the original constraints, in similar vein to the incumbent update process \eqref{eq:update1}-\eqref{eq:update2}. If this LP is feasible, its solution updates the incumbent, and the bound $U_k-\ell_k$ is tested against the prescribed optimality tolerance. We ought to mention that rounding followed by a fixed-integer LP solve is an established ingredient of many mixed-integer optimization heuristics  \cite{fischetti2010heuristics}.

\begin{algorithm}[!htbp]
\caption{Optimality Recovery}\label{alg:recovery}
\small
\begin{algorithmic}[]
\Require Admitted point $x_{k+1}=(z_{k+1},y_{k+1})$, incumbent $U_k$, lower bound $\ell_k$, optimality tolerance $\eps>0$.
\State Choose $m_{k+1}\in\argmin_{m\in\Z^p}\norm{z_{k+1}-m}_\infty$.
\State Solve the original fixed-integer LP \eqref{eq:update1} at $z=m_{k+1}$.
\If{$Y(m_{k+1})=\varnothing$}
  \State \Return to Algorithm \ref{alg:bf}.
\EndIf
\State Let $\widehat y_{k+1}$ be the optimizer of \eqref{eq:update1} and set $\widehat x_{k+1}:=(m_{k+1},\widehat y_{k+1})$.
\State Set $U_k\gets\min\{U_k,f(\widehat x_{k+1})\}$ and store the corresponding incumbent point $x_{k+1}^*\in X$.
\If{$U_k\le\ell_k$}
  \State \Return $U_k=z^*$ and \textbf{terminate} with $x^*_{k+1}\in X^*$.
\ElsIf{$U_k-\ell_k\le\eps$}
  \State \textbf{terminate} with the $\eps$-optimal solution $x_{k+1}^*\in X$.
\Else
\State \Return to Algorithm 1 with the updated incumbent $U_k$.
\EndIf
\end{algorithmic}
\end{algorithm}

The main goal of this section is to establish that (BF) along with Algorithm \ref{alg:recovery} will return an $\varepsilon$-optimal solution in at most $\mathcal{O}(\varepsilon^{-1})$ outer iterations. This complexity bound must depend on universal constants that are problem-dependent but not execution-dependent. To this end, we assume that $g$ is uniformly bounded over the space of successful iterations:
\vspace{-0.1cm}
\begin{description}\hypertarget{f4}{}
\item[\hyperlink{f4}{(F4)}] There exists $\kappa_g<+\infty$ such that $g_k\leq \kappa_g$ for every $k\geq 0$.
\end{description}
\vspace{-0.1cm}
This is a mild assumption that is commonly satisfied under various settings in the relevant literature: Clearly, if the original MILP problem is infeasible, then one may simply set $\kappa_g:=\delta$. If $X\neq \varnothing$ and a feasible solution $x^{\text{feas}}$ is known apriori, then (BF) can initialize $U_0:=f(x^{\text{feas}})$. In that case, we have $g_k= U_k-f_k\leq f(x^{\text{feas}})-f_{\text{low}}=:\kappa_g$, by monotonicity. \hyperlink{f4}{(F4)} is also satisfied if the set $\Mcal:=\{m\in\Z^p:Y(m)\ne\varnothing\}$ is nonempty and finite, which is always the case when $X\neq\varnothing$ and the integer variables of \eqref{eq:milp} are bounded over $P$ (see \hyperlink{f5}{(F5)}). Indeed, the value $U^*:=\max_{m\in\Mcal}v(m)$ is well-defined and finite, and the constant $\kappa_g:=\max\{\delta,U^*-f_{\text{low}}\}$ controls $g$ over the space of all filter-admissible iterates.

We also assume a linear funnel function $\omega:[0,1/2]\to \mathbb{R}_{+}$ of the form
\begin{equation}\label{eq:linear-funnel}
 \omega(h):=\kappa_\omega h,\quad\text{with}\;\; \kappa_\omega\geq 2.
\end{equation}
In general, a larger \textit{funnel radius} $\kappa_{\omega}$ relaxes the funnel mechanism and increases its ``node span" during the search process. As the integer constraint violation decreases to zero, the funnel becomes tighter, and the algorithm becomes more selective. As we will see in Section \ref{sec:experiments}, a larger radius usually allows the restoration phase to identify a filter-admissible point faster, but a smaller $\kappa_\omega$ offers better control and faster convergence to optimality (also see Theorem \ref{th:complexity} below). We resort here to the lower bound $\kappa_\omega\geq 2$ for normalization purposes.

The main iteration complexity result of this section follows. In order to be fully rigorous and avoid the possibility of landing near an approximate optimal solution during the initial phase, we explicitly assume that the feasibility recovery mechanism is also invoked at the end of initialization, before the first outer iteration, with $x_0$, incumbent $U_0$ and reference $f_{\text{low}}$.

\begin{theorem}\label{th:complexity}
    Suppose that \hyperlink{f1}{(F1)}-\hyperlink{f4}{(F4)} hold and let $\varepsilon\in (0,1)$. In addition, suppose that $X\neq \varnothing$, the funnel function satisfies \eqref{eq:linear-funnel} and that Algorithm \ref{alg:recovery} is called right after replacement construction of node $Z$, as well as at the end of the initial phase. Then, Algorithm \ref{alg:bf} (BF) returns an $\varepsilon$-optimal feasible solution, or terminates earlier with a global optimum, in at most
    \begin{equation*}
        N_{\varepsilon}:=\ceil[\Bigg]{\frac{\log\left(1/\chi(\varepsilon)\right)}{\log\left(1/\beta\right)}+\frac{2\kappa_g(2-\beta)}{\gamma(1-\beta)\chi(\varepsilon)}}+1
    \end{equation*}
total outer iterations, where $ \chi(\eps):=\min\left\{\mu_Q,\displaystyle\frac{2\eps}{\kappa_{\omega}+C^*}\right\}$, for $\mu_Q$ and $C^*$ as in \eqref{eq:measure} and \eqref{eq:constant}, respectively.
\end{theorem}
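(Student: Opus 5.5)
We argue by contradiction. Suppose (BF) with Algorithm \ref{alg:recovery} has neither terminated with a global optimum nor returned an $\varepsilon$-optimal point within $N_\varepsilon$ outer iterations. The plan has two parts: (i) count admitted iterates with $h_k\ge\chi(\varepsilon)$, and (ii) show that one admitted iterate with $h_{k+1}<\chi(\varepsilon)$ forces the recovery test $U_k-\ell_k\le\varepsilon$ to succeed.

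For (ii), fix an admitted $x_{k+1}$ with $h_{k+1}<\chi(\varepsilon)\le\mu_Q$. We have $z_{k+1}\in Q$, and by \eqref{eq:round-contradiction} the rounded vector $m_{k+1}$ lies in $Q$, so $Y(m_{k+1})\neq\varnothing$ and Algorithm \ref{alg:recovery} produces $\widehat x_{k+1}\in X$. Since $\widehat y_{k+1}$ is fiber-optimal, $f(\widehat x_{k+1})\le f(\widetilde x_{k+1})$, where $\widetilde x_{k+1}$ is the Hoffman repair of Theorem \ref{th:main}. Estimate \eqref{eq:tech4} then gives $f(\widehat x_{k+1})\le f_{k+1}+C^*h_{k+1}$. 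The funnel with \eqref{eq:linear-funnel} gives $f_{k+1}\le \ell_k+\kappa_\omega h_{k+1}$. Together,
\begin{equation*}
U_k^{\mathrm{new}}-\ell_k\le(\kappa_\omega+C^*)h_{k+1}<(\kappa_\omega+C^*)\chi(\varepsilon)\le 2\varepsilon .
\end{equation*}
This falls short of $\varepsilon$ by a factor of $2$. I therefore expect the constant in $\chi$ to come from $\dist(z_{k+1},m_{k+1})\le h_{k+1}$ combined with $\kappa_\omega\ge2$ and a sharper accounting, or else the intended test is effectively on $2\varepsilon$. I would first try to absorb the factor by using the $\ell_k$ recomputed after the incumbent update, and if that fails, tighten $\chi$ harmlessly. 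Either way, once the recovery test fires, the output is $\varepsilon$-optimal because $\ell_k\le z^*$ by Lemma \ref{lem:ellbound}, so $f(x^*_{k+1})-z^*\le U_k-\ell_k$. The initial recovery call covers the case $h_0<\chi(\varepsilon)$.

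For (i), we bound the number of admitted iterates with $h_k\ge\chi:=\chi(\varepsilon)$. By Lemma \ref{lem:historical}, every pair $i<j$ satisfies either $h_j\le\beta h_i$ or $g_j\le g_i-\gamma h_j$. Split the indices of the sequence with $h_k\ge\chi$ into \emph{$h$-type} steps and \emph{$g$-type} steps relative to the preceding such iterate.
\begin{itemize}
\item Along $h$-type steps, $h$ contracts by a factor $\beta$ starting from at most $1/2$. A chain of them therefore has length at most about $\log(1/\chi)/\log(1/\beta)$.
\item Each $g$-type step decreases $g$ by at least $\gamma\chi$. Since $0\le g\le\kappa_g$ by (F4) and nonnegativity on $E(U)$, their number is about $\kappa_g/(\gamma\chi)$.
\end{itemize}

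The main obstacle is the interleaving of the two types. An $h$-increase at a $g$-type step can reset the geometric chain, and $g$ can jump when the incumbent $U_k$ decreases, since $g_i$ is measured at different incumbents. I would handle this with a potential argument: compare each iterate with the last iterate at which $h$ was at least $h_k/\beta$, or group maximal $h$-contracting runs and charge each run's $g$-decrease, giving at most a factor $(2-\beta)/(1-\beta)$ overhead. Lemma \ref{lem:historical} applies to all pairs, not only consecutive ones, so fixing a reference record avoids the incumbent-shift issue. This accounts for the factor $2(2-\beta)/(1-\beta)$ in $N_\varepsilon$. The final $+1$ covers the iteration at which recovery succeeds.
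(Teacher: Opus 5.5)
There is a genuine gap in part (i), and a related unresolved step in part (ii).

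\textbf{Part (i): the counting does not go through as set up.} You classify each iterate as $h$-type or $g$-type relative to the \emph{preceding} iterate, but this supports neither of your two bullets. When the $h$-alternative holds, Lemma \ref{lem:historical} says nothing about $g$, so $g$ can climb back to $\kappa_g$ at every $h$-type step. Likewise, $h$ can rise at every $g$-type step, so $h$-chains restart. Concretely, let $h$ alternate between $0.4$ and $0.4\beta$, reset $g$ to $\kappa_g$ at each down-step, and lower it by $0.4\gamma$ at each up-step. This sequence satisfies every consecutive-pair condition indefinitely.

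So the whole proof rests on your unspecified ``potential argument''. The missing idea is the paper's strip decomposition, which applies Lemma \ref{lem:historical} to \emph{non-consecutive} pairs:
\begin{itemize}
\item Cover $(\chi,1/2]$ by the $R_\chi=\lceil\log(1/(2\chi))/\log(1/\beta)\rceil$ strips $I_l=(\alpha_l,u_l]$, where $u_l=\beta^l/2$ and $\alpha_l=\max\{\chi,\beta^{l+1}/2\}$.
\item If $k_1<\dots<k_s$ are the iterates with $h\in I_l$, then $\beta h_{k_i}\le\beta u_l\le\alpha_l<h_{k_j}$ for all $i<j$. Hence the $h$-alternative is impossible inside a strip.
\item It follows that $g_{k_{j+1}}\le g_{k_j}-\gamma\alpha_l$. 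Combined with $0\le g\le\kappa_g$ (stored values, so the incumbent shift is harmless), this gives $s\le 1+\kappa_g/(\gamma\alpha_l)$.
\item Summing over strips, the $+1$'s give the logarithmic term. The geometric sum $\sum_l 1/\alpha_l\le(2-\beta)/((1-\beta)\chi)$, using $\beta^{R_\chi-1}>2\chi$, gives the second term.
\end{itemize}
The factor $(2-\beta)/(1-\beta)$ is therefore this geometric sum, not an interleaving overhead. The bound is additive precisely because no $h$-contraction can occur within a strip.

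\textbf{Part (ii): the factor $2$ is left open and then misattributed.} You correctly notice that threshold $\chi(\varepsilon)$ only gives $2\varepsilon$. Recomputing $\ell_k$ yields no quantitative gain, and the $2$ in $2\kappa_g$ does not come from the counting. The paper resolves this by running the strip count at threshold $\chi:=\chi(\varepsilon)/2<1/2$. Since $\log(1/(2\chi))=\log(1/\chi(\varepsilon))$ and $\kappa_g/\chi=2\kappa_g/\chi(\varepsilon)$, this choice produces exactly the form of $N_\varepsilon$. It also yields an admitted iterate with $h_{k+1}\le\chi(\varepsilon)/2\le\mu_Q/2$. For that iterate your estimate gives $U_k-\ell_k\le(\kappa_\omega+C^*)\chi(\varepsilon)/2\le\varepsilon$.

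Apart from this, your part (ii) is sound. Your explicit use of the fiber-optimality of $\widehat y_{k+1}$ against the Hoffman repair is slightly more careful than the paper's corresponding step.
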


\begin{proof}
Fix $\chi\in(0,1/2)$. For $l\geq 0$, define $u_l:=\beta^l/2$, $\alpha_l:=\max\{\chi,\beta^{l+1}/2\}$, and $I_l:=(\alpha_l,u_l]$. For $R_\chi:=\ceil[\Big]{\log\left(1/(2\chi)\right)/\log\left(1/\beta\right)}$, 
the disjoint strips $I_0,I_1,...,I_{R_\chi-1}$ cover the interval $(\chi,1/2]$. Observe that $R_\chi\geq 1$, where $R_\chi=1$ corresponds to the case $\beta/2\leq \chi<1/2$. Let $k_1<...<k_s$ be all iterations in the strip $I_l$ for some fixed $l\in \{0,...,R_\chi-1\}$. For $i<j$, we have $\beta h_{k_i}\le\beta u_l\le \alpha_l<h_{k_j}$, by definition. Hence,
\begin{align}\label{eq:sharp-telescope}
 (s-1)\gamma \alpha_l
 &\le\gamma\sum_{j=1}^{s-1}h_{k_{j+1}} \notag \\
 &\le\sum_{j=1}^{s-1}(g_{k_j}-g_{k_{j+1}})\notag\\
 &=g_{k_1}-g_{k_s} \notag \\
 &\le \kappa_g,
\end{align}
where we invoked Lemma \ref{lem:historical} in the second inequality, and non-negativity of $g$ and assumption \hyperlink{f4}{(F4)} in the last inequality. Therefore, this particular strip $I_l$ contains at most $1+\kappa_g/(\gamma a_l)$ filter-admissible iterates.

Next, we define the set  $\mathcal{S}_{\chi}:=\{k\geq 0:h_k>\chi\}$. By repeating the above argument for every interval of the form $I_l$ that covers $(\chi,1/2]$, we deduce that $\mathcal{S}_\chi$ is finite. In particular, we have
\begin{equation}\label{eq:strip-sum}
    |\mathcal{S}_\chi|\leq R_\chi+\frac{\kappa_g}{\gamma}\sum_{l=0}^{R_\chi-1}\frac1{\alpha_l}.
\end{equation}
All strips except the last have $\alpha_l=\beta^{l+1}/2$, and the last one has $\alpha_{R_\chi-1}=\chi$. Consequently, with an empty sum interpreted as zero when $R_\chi=1$, we obtain
\begin{align}\label{eq:geometric-sum}
 \sum_{l=0}^{R_\chi-1}\frac1{\alpha_l}
 &=\frac1\chi+2\sum_{l=1}^{R_\chi-1}\beta^{-l}\notag\\
 &=\frac1\chi+\frac{2(\beta^{-(R_\chi-1)}-1)}{1-\beta}\notag\\
 &\le\frac1\chi+\frac1{(1-\beta)\chi} \notag \\
 &=\frac{2-\beta}{(1-\beta)\chi},
\end{align}
where the inequality used the relation $\beta^{R_\chi-1}>2\chi$, the latter of which follows from $\chi<1/2$, $\beta<1$, and the definition of $R_\chi$. From \eqref{eq:strip-sum} and \eqref{eq:geometric-sum} we deduce that
\begin{equation}\label{eq:chi}
    |\mathcal{S}_\chi|\leq \ceil[\Bigg]{\frac{\log\left(1/(2\chi)\right)}{\log\left(1/\beta\right)}+\frac{\kappa_g(2-\beta)}{\gamma(1-\beta)\chi}}
\end{equation}
for every $\chi\in (0,1/2)$. Given $\varepsilon\in (0,1)$, let $\chi(\varepsilon)$ be defined as in the statement. Recall that $\varepsilon\in (0,1)$, $\kappa_{\omega}\geq 2$ by \eqref{eq:linear-funnel}, and $C^*\geq 0$ from \eqref{eq:constant}, so $\chi(\varepsilon)<1$. Therefore, we can apply \eqref{eq:chi} for $\chi:=\chi(\varepsilon)/2$ and deduce that (BF) will output a point $x_{k+1}=(z_{k+1},y_{k+1})$ such that $h_{k+1}\leq \chi(\varepsilon)/2$ in at most $N_\varepsilon$ outer iterations.

Now, observe that, for that point $x_{k+1}$, we must have $h_{k+1}\leq \mu_Q/2$, so $m_{k+1}\in Q$ and $Y(m_{k+1})\neq \varnothing$ by \eqref{eq:round-contradiction}, where $m_{k+1}$ represents the closest integer vector to $z_{k+1}$. Algorithm \ref{alg:recovery} is called after $x_{k+1}$ is admitted and node $Z$ is constructed by \eqref{eq:replacement}, and returns a point $\widehat{x}_{k+1}=(m_{k+1},\widehat{y}_{k+1})\in X$. In similar fashion to \eqref{eq:tech4}, the computed feasible point satisfies
\begin{equation}\label{eq:almost}
   f(\widehat{x}_{k+1})\leq f(x_{k+1})+C^* h(x_{k+1}).
\end{equation}
Since $X\neq \varnothing$ and (BF) does not terminate at iterate $k$ with an exact global solution, Lemma \ref{lem:ellbound} yields $\ell_k\leq z^*$. The funnel property (see \eqref{eq:funnelproperty}), \eqref{eq:almost} and bound $h_{k+1}\leq \chi(\varepsilon)/2$ yield
\begin{align*}
    f(\widehat{x}_{k+1})-z^*&\leq f(\widehat{x}_{k+1})-\ell_k \\
    &\leq f(x_{k+1})-\ell_k +C^* h(x_{k+1}) \\
    &\leq (\kappa_{\omega}+C^*) h(x_{k+1}) \\
    &\leq (\kappa_{\omega}+C^*) \chi(\varepsilon)/2 \\
    &\leq \varepsilon.
\end{align*}
Therefore, $\widehat{x}_{k+1}$ is an $\varepsilon$-optimal feasible solution, and the complexity result follows.
\end{proof}

When the optimality threshold $\varepsilon$ is sufficiently small, we get $\chi(\eps)=2\eps/(\kappa_{\omega}+C_*)$. Observe that any $\varepsilon>0$ strictly smaller than $1/\max_{i} \norm{B_i}_1$ satisfies the latter due to \eqref{eq:rho}. The final iteration complexity bound then becomes
\begin{equation*}
    N_{\eps}= \mathcal{O}\left(\frac{\log\left(1/\varepsilon\right)}{\log\left(1/\beta\right)}+\frac{\kappa_g(2-\beta)(\kappa_{\omega}+C^*)}{\gamma(1-\beta)\varepsilon}\right).
\end{equation*}
Thus, by ignoring logarithmic terms, the worst-case number of iterations until the algorithm outputs an $\varepsilon$-optimal solution (or an exact optimal if it terminates earlier) is of order $\mathcal{O}(\varepsilon^{-1})$. 

We should lastly mention that, similarly to asymptotic optimality (Remark \ref{rem:f1}), the objective function $f$ can replace the metric $g$, in which case (BF) admits a tighter iteration complexity bound of order $\mathcal{O}\left(\log(\varepsilon^{-1})\right)$. This is due to the fact that monotonicity of the sequence of lower bounds $\ell_k$ and the funnel property directly yield $f_i-f_j\leq \kappa_\omega h_i$ for every $i<j$. The same telescoping argument \eqref{eq:sharp-telescope} yields $(s-1)\gamma \alpha_l\leq f_{k_l}-f_{k_s}\leq \kappa_\omega u_l$, and thus $|\mathcal{S}_\chi|\leq (1+\frac{\kappa_\omega}{\beta\gamma})R_\chi$.

\section{A realization of the restoration oracle}\label{sec:restoration}

The restoration oracle does not prescribe a particular solver. Under explicit finite bounds on the integer variables, an exact (BB) solver on the selected node is an obvious candidate for realizing the oracle, as it either proves the absence of an incumbent-improving point or establishes a node optimum in finitely many steps \cite{belotti2013mixed}. The restoration phase's novel goal, however, is reflected upon the fact that the procedure can terminate at a much earlier iteration with the detection of a fractional filter-admissible point, thus avoiding an exhaustive search for an optimal mixed-integer solution or a node infeasibility certificate. As a result, during implementation, an iterative (BB) or (BC) algorithm could perform the filter and funnel tests at every generated ``suboptimal" solution. 

The purpose of this section is to introduce a realization of the restoration oracle that primarily focuses on identifying filter-funnel-admissible points. At a high-level, our proposed algorithm starts from the selected node’s relaxation and progressively removes regions containing unsuccessful candidates. At each inner iteration, a solver minimizes the objective function over the current search region. If the solution is fractional and fails the filter or funnel test, then the method excludes a number of cells surrounding it, while preserving within those cells all mixed-integer feasible points and all points satisfying both acceptance requirements. This process continues until the method finds an admissible fractional point, obtains a node-optimal mixed-integer solution, or certifies that the node contains no feasible solution improving the incumbent.

In order to guarantee that the proposed algorithm will always output one of \hyperlink{or1}{(OR1)}-\hyperlink{or1}{(OR3)} in finitely many iterations (i.e., that \hyperlink{f3}{(F3)} holds), we need to impose additional conditions on the structure of the original problem. We therefore make the following assumption:
\vspace{-0.1cm}
\begin{description}\hypertarget{f5}{}
\item[\hyperlink{f5}{(F5)}] There exist $\underline{z},\overline{z}\in \mathbb{Z}^{p}$ such that $\underline{z}\leq z \leq \overline{z}$ for every $(z,y)\in P$.
\end{description}
\vspace{-0.1cm}
This assumption is a bounded-integer-variable restriction, with direct precedents in various works in the relevant literature (for instance, see \citep{chen2011finite, dash2018binary}). It is important to note that this condition is imposed on the relaxations, not only on mixed-integer feasible points. The distinction matters when the latter set is empty. Continuous variables may remain unbounded, which makes \hyperlink{f5}{(F5)} weaker than the more common assumption that $P$ is a compact polyhedron \cite{sun2024decomposition, testa2019distributed}.

\subsection{The cell-deletion algorithm}\label{sec:znode}

Fix a restoration instance at iteration $k$ and let $N_k\in \mathcal{O}_k$ be the selected node. In the formulas below, we write $N=N_k$ for simplicity of exposition. The quantities $U_k$, $\ell_k$, and $\F_k$ are also frozen for this call, and they are recomputed in Algorithm \ref{alg:rp} if a subsequent invocation processes a different surviving open node. We also assume a linear funnel given by \eqref{eq:linear-funnel}.

Before entering the full cell-deletion procedure mentioned above (and explicitly described below), Algorithm \ref{alg:fecc} first performs an early inexpensive test that requires solving $2p$ LPs. More specifically, let $x_N=(z_N,y_N)$ be the stored LP minimizer of $N$, and let $m_N\in \mathbb{Z}^{p}$ be the nearest integer vector to $z_N$. Also let $V_p=\{\pm e_1,...,\pm e_p\}$, where $e_i\in \mathbb{R}^{p}$ is the unit vector whose $i$-th coordinate is equal to 1, and let $\alpha_k:=\beta \min_{i\in \mathcal{I}_k} h_i$, where $\mathcal{I}_k$ denotes the set of all active filter indices. At its initial stage, the introduced algorithm solves the LP ``$\min f(x)$ subject to $x=(z,y)\in W_{k,N}(v)$" for every unit direction $v\in V_p$, where
\begin{equation}\label{eq:www}
    W_{k,N}(v):=\left\{ x\in P(N): -\alpha_k\mathbf{1}\leq z-m_N\leq \alpha_k \mathbf{1},\; f(x)-\ell_k\leq \kappa_w v^\top (z-m_N),\; f(x)\leq U_k \right\}.
\end{equation}
If any of these $2p$ LPs is feasible, then the algorithm outputs either a global optimal solution or outcome \hyperlink{or3}{(OR3)}, depending on the integrality violation of the solution. Otherwise, if all LPs are infeasible, we move to the proposed method's full iterative stage described right below.

For $m\in\Z^p$, we define the closed nearest-integer cell $C_m:=m+[-\tfrac12,\tfrac12]^p$, and its node restriction by
\begin{equation}\label{eq:cells}
 P_N[m]:=\{(z,y)\in P(N):z\in C_m\}.
\end{equation}
Further, let 
\begin{equation}
    \Lcal_N:=\{m\in\Z^p:P_N[m]\ne\varnothing\}
\end{equation}
represent the family of labels of all closed cells that intersect the node relaxation. By \hyperlink{f5}{(F5)}, it is a subset of the integer box $[\underline z,\overline z]\cap\Z^p$, so it is finite. Every real point belongs to at least one closed nearest-integer cell, so $P(N)=\bigcup_{m\in\Lcal_N}P_N[m]$. At a tie, a point may belong to several cells; this overlap is retained in the construction. Lastly, we define the \textit{protected set} within node $N$, which includes all filter-funnel-admissible points as well as all mixed-integer feasible solutions, 
\begin{equation}\label{eq:rset}
    \mathcal{R}_{k,N}:=\Acal_k^{\res}(N)\cup X(N).
\end{equation}
At every inner iteration $r\geq 0$, the proposed algorithm minimizes the objective function over the \textit{search region}
\begin{equation}\label{eq:protected}
 \mathcal{D}_{k,N,r}:=\mathcal{R}_{k,N}\cup
       \bigcup_{m\in\Lcal_N\setminus S_{k,r}}P_N[m].
\end{equation}
Here, $S_{k,r}$ represents the set of all excluded cell labels before iteration $r$, that is, of all ``neighboring" cells related to all previously rejected points. To be more precise, let $x_{k,r}$ be the solution of the problem $\min_{x\in \mathcal{D}_{k,N,r}} f(x)$ at inner iteration $r$. If that point is rejected, meaning that it is not filter-funnel-admissible or node-optimal, we exclude all still-present cell labels that touch it, while preserving protected points. Specifically, define
\begin{equation}\label{eq:touching}
 B_{k,r}(x_{k,r}):=\{m\in\Lcal_N\setminus S_{k,r}:x_{k,r}\in P_N[m]\}
\end{equation}
and pick a set $T_{k,r}$ such that $B_{k,r}(x_{k,r})\subseteq T_{k,r}\subseteq\Lcal_N\setminus S_{k,r}$. Then, we set
\begin{equation}\label{eq:update}
    S_{k,r+1}:=S_{k,r}\cup T_{k,r}
\end{equation}
and repeat the process. Excluding all touching labels ensures that the failed point $x_{k,r}$ belongs to no remaining cell afterward, and shared cell boundaries do not permit the same point to persist through a different touching cell. An illustration of this process is given in Figure \ref{fig:fecc} below.

Since every term in \eqref{eq:protected} is contained in $P(N)$, one has
\begin{equation}\label{eq:protected-inclusion}
 X(N)\subseteq\mathcal{R}_{k,N}\subseteq\mathcal{D}_{k,N,r}\subseteq P(N).
\end{equation}
In fact, $\mathcal{D}_{k,N,r+1}\subseteq\mathcal{D}_{k,N,r}$, so the corresponding optimal values are nondecreasing, with value $+\infty$ assigned to an empty set. With the initialization $S_{k,0}=\varnothing$, we have $\mathcal{D}_{k,N,0}=P(N)$. The above process along with the inclusion $X(N)\subseteq\mathcal{D}_{k,N,r}$, whenever $X(N)\neq\varnothing$, yield
\begin{equation}\label{eq:FECC-monotone}
f(x_{N})\leq f(x_{k,1})\leq f(x_{k,2})\leq \cdots \leq f(x_{k,r})\leq \cdots
\le v_N:=\min_{x\in X(N)}f(x).
\end{equation}
In other words, at each inner iteration, the cell-deletion algorithm is getting monotonically closer to a node-optimal solution, while searching for filter-funnel admissible points at the same time. The full algorithm is described below.  

\begin{figure}[H]
    \centering
    \includegraphics[width=\textwidth]{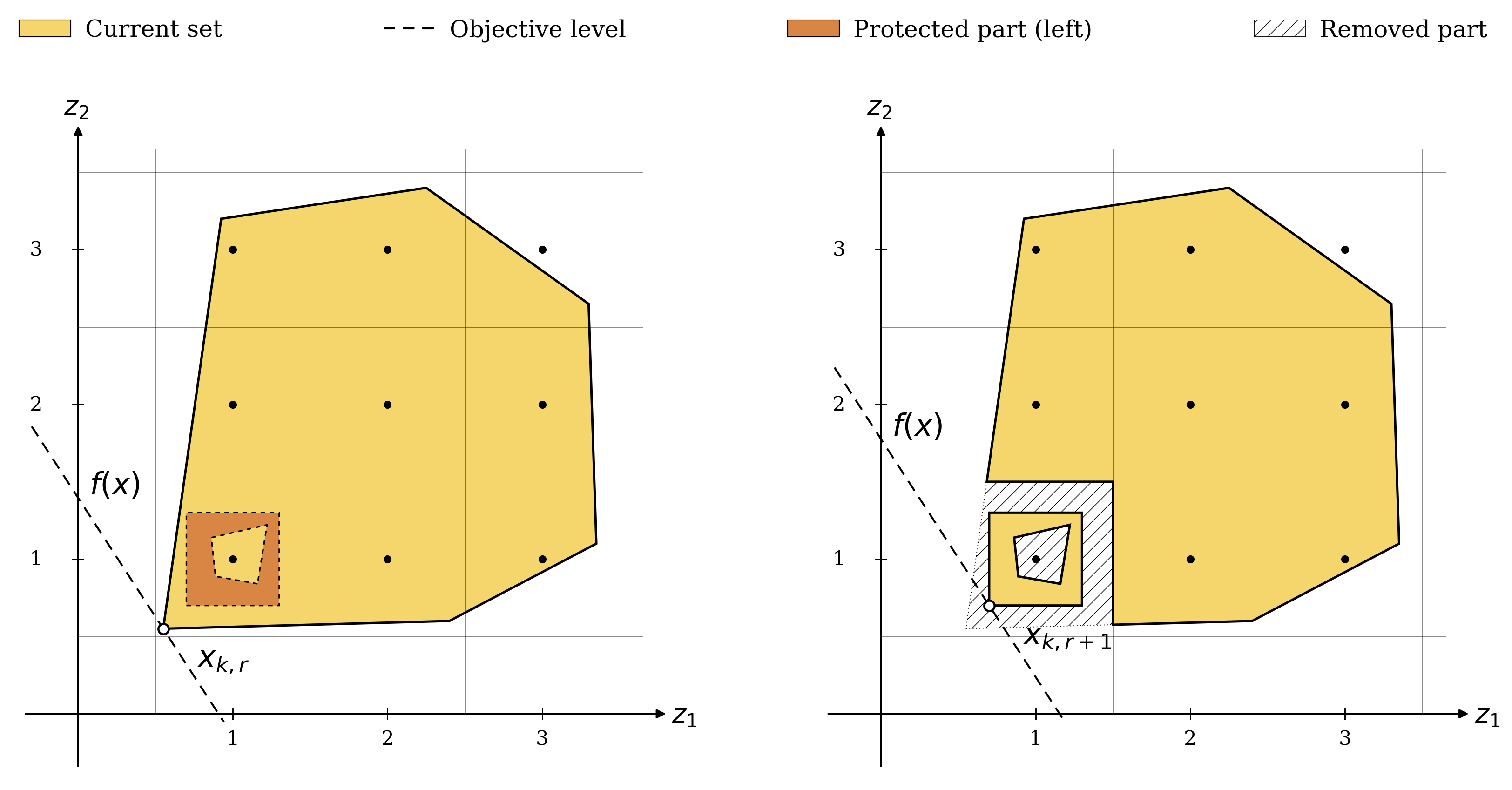}
    \caption{\small One cell-deletion iteration for a MILP problem with dimensions $p=2,\;q=0$. Within the selected cell centered at $(1,1)$, the conditions imposed by $h$ define a square in this particular instance. Intersecting this square with the funnel set produces the orange protected region, whose central ``hole" contains inadmissible fractional points. The point $(1,1)$ remains protected separately. Excluding the cell removes only its unprotected portion (hatched), leaving the schematic yellow region shown on the right.}
    \label{fig:fecc}
\end{figure}

\begin{algorithm}[H]
\caption{Cell-Deletion (CD)}\label{alg:fecc}
\small
\begin{algorithmic}[]
\Require Node $N\in \mathcal{O}_k$, filter $\F_k$, incumbent $U_k$, funnel radius $\kappa_\omega\geq 2$.
\State For every $v\in V_p$ solve $\min_{x\in W_{k,N}(v)} f(x)$, where $W_{k,N}(v)$ is given by \eqref{eq:www}.
\If {$x_k^v\in W_{k,N}(v)$ is an optimal solution for some $v\in V_p$}
\If{$h(x_k^v)>0$}
\State \Return \hyperlink{or3}{(OR3)} with $x_{k,r}$ to Algorithm \ref{alg:rp}.
\Else
\State \textbf{terminate} all algorithms with $x_k^v\in X^*$ and $f(x_k^v)=z^*$.
\EndIf
\EndIf
\State Construct the finite cell family and protected set in \eqref{eq:cells}--\eqref{eq:rset}; set $S_{k,0}:=\varnothing$.
\For{$r=0,1,2,\ldots$}
  \State Solve the inner subproblem
  \begin{equation}\label{eq:sub}
      \varphi_{k,N,r}:=\min_{x\in \mathcal{D}_{k,N,r}} f(x)
  \end{equation}
  \If{$\varphi_{k,N,r}=+\infty$}
    \State \Return \hyperlink{or1}{(OR1)} to Algorithm \ref{alg:rp}.
  \EndIf
  \State Select an optimizer $x_{k,r}$ of \eqref{eq:sub}.
  \If{$U_k<+\infty$ and $\varphi_{k,N,r}\ge U_k$}
    \State \Return \hyperlink{or1}{(OR1)} to Algorithm \ref{alg:rp}.
  \ElsIf{$h(x_{k,r})=0$}
    \State \Return \hyperlink{or2}{(OR2)} with $x_{k,r}$ and its node-optimality certificate to Algorithm \ref{alg:rp}.
  \ElsIf{$x_{k,r}\in\Acal_k^{\res}(N)$}
    \State \Return \hyperlink{or3}{(OR3)} with $x_{k,r}$ and the certificate $\varphi_{k,N,r}\le f(w)$ for all $w\in X(N)$ to Algorithm \ref{alg:rp}.
  \Else
    \State Compute $B_{k,r}(x_{k,r})$ and choose $T_{k,r}$ such that $B_{k,r}(x_{k,r})\subseteq T_{k,r}\subseteq\Lcal_N\setminus S_{k,r}$.
    \State Set $S_{k,r+1}:=S_{k,r}\cup T_{k,r}$.
  \EndIf
\EndFor
\end{algorithmic}
\end{algorithm}

It is crucial to verify that the algorithm is well-defined, in the sense that all tests conducted within every inner iteration are valid: If the main feasible region $\mathcal{D}_{k,N,r}$ is empty, then the inclusion $X(N)\subseteq\mathcal{D}_{k,N,r}$ \eqref{eq:protected-inclusion} directly gives $X(N)=\varnothing$, so \hyperlink{or1}{(OR1)} is valid for either incumbent regime. If the subproblem \eqref{eq:sub} admits a solution $x_{k,r}$, then the latter belongs to $\mathcal{D}_{k,N,r}\subseteq P(N)$, again by \eqref{eq:protected-inclusion}. A finite incumbent with $f(x_{k,r})\ge U_k$ and \eqref{eq:FECC-monotone} give $X_{<U_k}(N)=\varnothing$, validating \hyperlink{or1}{(OR1)}. The initial $U_k\le\ell_k$ test is valid for the same reason, using $f\ge\ell_k$ on $P(N)$. If this finite-incumbent test fails and $h(x_{k,r})=0$, then $x_{k,r}\in X(N)$, so \eqref{eq:FECC-monotone} proves its node optimality. Note that its value $f(x_{k,r})$ is strictly below $U_k$, as required for \hyperlink{or1}{(OR2)}. If the integrality test fails but $x_{k,r}\in\Acal_k^{\res}(N)$, it is a fractional admissible point, so \hyperlink{or1}{(OR3)} is valid.

If all the above tests fail, then the algorithm moves to the next inner iteration. Under \hyperlink{f5}{(F5)}, we now show that the cell-deletion algorithm will eventually terminate with exactly one of the outcomes required by the restoration oracle, in at most finitely many steps.

\begin{proposition}\label{prop:FECC}
\textit{Suppose that \hyperlink{f1}{(F1)}, \hyperlink{f2}{(F2)} and \hyperlink{f5}{(F5)} hold. Also, assume that the subproblem \eqref{eq:sub} is solved exactly in every inner iteration. Then, Algorithm \ref{alg:fecc} terminates with exactly one of \hyperlink{or1}{(OR1)}-\hyperlink{or1}{(OR3)}, in finitely many iterations.}
\end{proposition}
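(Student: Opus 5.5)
The argument has three parts. First, the preliminary stage is well defined and finite. Second, each inner iteration either returns one of the three outcomes or strictly enlarges the excluded label set $S_{k,r}$. Third, because $\Lcal_N$ is finite under \hyperlink{f5}{(F5)}, only finitely many strict enlargements are possible, and once every label is excluded the remaining region is the protected set, on which the algorithm must stop.

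\textbf{Preliminary stage.} Here $2p$ LPs are solved exactly, so the stage finishes in finitely many steps. Each $W_{k,N}(v)$ is a polyhedron contained in $P(N)$, where $f$ is bounded below by \hyperlink{f1}{(F1)}; hence each LP is either infeasible or attains its minimum. If some minimizer is fractional, I would check that it lies in $\Acal_k^{\res}(N)$, so that returning \hyperlink{or3}{(OR3)} is valid:
\begin{itemize}
\item the box constraint gives $h\le\alpha_k\le\beta h_i$ for every active $i$, so the point is filter-admissible (by Lemma \ref{lem:historical}, testing only the active filter suffices);
\item the constraint $f\le U_k$ gives eligibility;
\item the inequality $\kappa_\omega v^\top(z-m_N)\le\kappa_\omega\norm{z-m_N}_\infty$ gives the funnel condition, provided $\norm{z-m_N}_\infty=h(x)$.
\end{itemize}
This last identity is the delicate point of the stage. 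I would argue that $\alpha_k<1/2$ forces $m_N$ to be a nearest integer vector to $z$, so the identity holds.

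\textbf{Each inner iteration.} The set $\mathcal D_{k,N,r}$ is a finite union of polyhedra. The sets $P_N[m]$ are polyhedra. The set $X(N)$ is a union of fibers over the finitely many integer points in the box of \hyperlink{f5}{(F5)}. The set $\Acal_k^{\res}(N)$ is a finite union of polyhedra, since its defining conditions are finite disjunctions of linear inequalities in $h$. Because $f$ is bounded below on $P(N)$ by \hyperlink{f1}{(F1)}, the minimum in \eqref{eq:sub} is either $+\infty$ or attained; \eqref{eq:sub} is solved exactly by hypothesis. The well-definedness discussion preceding the proposition already shows that each returned outcome is valid. The outcomes are checked in a fixed order, so exactly one is returned.

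\textbf{Progress on rejection.} Suppose all tests fail at iteration $r$. Then $x_{k,r}$ is fractional and not in $\Acal_k^{\res}(N)$, so $x_{k,r}\notin\mathcal R_{k,N}$. Since $x_{k,r}\in\mathcal D_{k,N,r}$, it must lie in some $P_N[m]$ with $m\notin S_{k,r}$. Hence $B_{k,r}(x_{k,r})\neq\varnothing$ and $S_{k,r+1}\supsetneq S_{k,r}$. As $S_{k,r}\subseteq\Lcal_N$ and $\Lcal_N$ is finite by \hyperlink{f5}{(F5)}, there can be at most $|\Lcal_N|$ rejections.

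\textbf{Termination once all labels are excluded.} If $S_{k,r}=\Lcal_N$, then $\mathcal D_{k,N,r}=\mathcal R_{k,N}$. If this set is empty, \hyperlink{or1}{(OR1)} is returned. Otherwise any minimizer lies in $X(N)$ or in $\Acal_k^{\res}(N)$, so one of the three tests fires. Thus the algorithm stops after at most $|\Lcal_N|+1$ inner iterations.

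The main obstacle is the attainment of the minimum in \eqref{eq:sub} over the nonconvex set $\mathcal R_{k,N}$. Writing it as a finite union of polyhedra needs care with the strict disjunction structure. It also needs care with the fact that $\min\{\cdot\}$ over the active filter yields closed sets: the conditions $h\le\beta h_i$ and $g\le g_i-\gamma h$, intersected with the cell on which $h$ is a piecewise-linear maximum of $|z_j-m_j|$, are closed polyhedral conditions. Closedness is what guarantees that each piece attains its infimum.
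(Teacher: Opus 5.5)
Your proof follows the paper's own argument closely. You validate the $2p$-LP preliminary stage, show that every rejection strictly enlarges $S_{k,r}$ (so at most $|\Lcal_N|$ rejections occur under (F5)), and note that once $\mathcal D_{k,N,r}=\mathcal R_{k,N}$ any minimizer triggers (OR1), (OR2) or (OR3). Your observation that $\alpha_k=\beta\min_{i\in\Ical_k}h_i<1/2$ forces $m_N$ to be the unique nearest integer vector to $z$, so that $\|z-m_N\|_\infty=h(x)$ on $W_{k,N}(v)$, improves on the paper's ``by construction''. Your attainment discussion covers what the paper defers to its polyhedral-representation subsection. The appeal to Lemma~\ref{lem:historical} is unnecessary, since $\Acal_k^F$ is defined through the active indices $\Ical_k$ only.

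There is, however, one branch you never treat. In the preliminary stage you only handle a fractional minimizer $x_k^v$. Algorithm~\ref{alg:fecc} also has the branch $h(x_k^v)=0$, in which it terminates everything claiming $x_k^v\in X^*$. The proposition is not proved until you show this is a legitimate outcome.

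You can close it as the paper does. If $z_k^v$ is integral and $\|z_k^v-m_N\|_\infty\le\alpha_k<1/2$, then $z_k^v=m_N$, so the funnel constraint of $W_{k,N}(v)$ gives $f(x_k^v)\le\ell_k$. By Lemma~\ref{lem:ellbound}, $\ell_k\le f(x)$ for all $x\in P(N)$. Hence $f(x_k^v)=\ell_k=\min_{P(N)}f$, and $x_k^v$ is node-optimal. Since $x_k^v\in X$, the same lemma gives $z^*\le f(x_k^v)=\ell_k\le z^*$, which is global optimality. Finally, the restoration phase runs the test ``$U_k<+\infty$ and $U_k\le\ell_k$'' before calling the oracle. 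So either $U_k=+\infty$ or $f(x_k^v)=\ell_k<U_k$, and this branch is a valid (OR2) outcome that additionally certifies global optimality.
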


\begin{proof}
Let $k\geq 0$ be a fixed outer iteration and $N\in \mathcal{O}_k$ be the chosen node. First, suppose the algorithm returns some  $x_k^v=(z_k^v,y_k^v)\in \argmin_{x\in W_{k,N}(v)} f(x)$ for some $v\in V_p$ during its initial stage. Observe that $m_N$ is the closest integer vector to $z_k^v$, i.e., $h(x_k^v)=\norm{z_k^v-m_N}_{\infty}$, by construction of $W_{k,N}(v)$. We consider two cases:
\begin{itemize}
    \item[--] If $x_k^v\not\in X(N)$, then $h(x_k^v)=\norm{z_k^v-m_N}\leq \alpha_k \leq \beta h_i$ for every $i\in \mathcal{I}_k$, meaning that $x_k^v$ is filter-admissible. In addition, we have
    \begin{equation}\label{eq:sand}
        f(x_k^v)-\ell_k\leq \kappa_\omega v^\top (z_k^v-m_N)\leq \kappa_\omega \norm{z_k^v-m_N}=\omega(h(x_k^v)),
    \end{equation}
namely, $x_k^v\in \mathcal{G}_k$. Hence, the algorithm returns \hyperlink{or3}{(OR3)} along with the fractional point $x_k^v$.
\item[--] If $x_k^v\in X(N)$, then \eqref{eq:sand} yields $f(x_k^v)\leq \ell_k\leq f(x_N)\leq f(x_k^v)$, meaning that $x_k^v$ is a node-optimal solution. In fact, since $f(x_k^v)=\ell_k$ and $x_k^v\in X$, we have $f(x_k^v)=z^*$ due to Lemma \ref{lem:ellbound}, so (CD) terminates and the main algorithm returns $x_k^v\in X^*$.
\end{itemize}
If none of the $2p$ LPs has an optimal solution, then the algorithm moves to its full iterative stage. Suppose that, at inner iteration $r\geq 0$, the returned solution $x_{k,r}$ is not mixed-integer or filter-admissible, i.e., $x_{k,r}\notin X(N)$ and $x_{k,r}\notin\Acal_k^{\res}(N)$, meaning that neither \hyperlink{or1}{(OR2)} nor \hyperlink{or1}{(OR3)} are returned during iteration $r$. As $x_{k,r}\in \mathcal{D}_{k,N,r}\setminus \mathcal{R}_{k,N}$, the returned point must belong to at least one remaining cell $P_N[m]$. Thus, $B_{k,r}(x_{k,r})\ne\varnothing$, and any $T_{k,r}$ satisfying \eqref{eq:update} is a nonempty subset of the previously unexcluded labels. In other words, we have
\begin{equation}\label{eq:strict}
 S_{k,r}\subsetneq S_{k,r+1}\subseteq\Lcal_N,
 \quad \text{with}\quad |S_{k,r+1}|\geq |S_{k,r}|+1.
\end{equation}
Let $r^*\geq 0$ be the first inner iterate for which we have $\mathcal{D}_{k,N,r^*}=\mathcal{R}_{k,N}$. Note that we must have $r^*\leq |\Lcal_N|<+\infty$ (due to \hyperlink{f5}{(F5)}), since at every inner repetition at least one touching cell is removed by the cell-deletion algorithm \eqref{eq:strict}. If $\mathcal{D}_{k,N,r^*}$ is empty, \hyperlink{or1}{(OR1)} is returned. Otherwise, the next selected optimizer of \eqref{eq:sub} must belong to either $X(N)$ or $\Acal_k^{\res}(N)$. Therefore, unless \hyperlink{or1}{(OR1)} is returned at an earlier stage of the inner iterate $r^*$, the algorithm returns either \hyperlink{or1}{(OR2)} or \hyperlink{or1}{(OR3)}, and terminates with output vector $x_{k,r^*}$.
\end{proof}

Lastly, Algorithm \ref{alg:fecc} allows for a simple realization of \eqref{eq:replacement} when \hyperlink{or3}{(OR3)} is retrieved by the full iterative procedure. Suppose that $x_{k+1}:=x_{k,r}$ is the returned point with the certificate $f(x)\geq f(x_{k+1})$ over $X(N_k)$. By the last failed \hyperlink{or1}{(OR1)} test, this point improves on the incumbent when the latter is finite. The artificial replacement node $Z$ can then be defined with $X(Z)$ as in \eqref{eq:replacement}, and a relaxed polyhedral region given by $P(Z):= \left\{x\in P(N_k):f(x_{k+1})\le f(x)\le U_k\right\}$.

\subsection{An exact polyhedral representation of the inner subproblem}

The success of the proposed algorithm in fully realizing assumption \hyperlink{f3}{(F3)} lies in solving the subproblem \eqref{eq:sub} exactly at every inner iteration. Although the objective is still linear, the feasible region $\mathcal{D}_{k,N,r}$ might not admit a closed convex polyhedral representation. In this subsection, we provide a union polyhedral representation of $\mathcal{D}_{k,N,r}$, when the funnel function $\omega$ is chosen as in \eqref{eq:linear-funnel}. This in turn allows one to solve \eqref{eq:sub} either by solving multiple (finitely many) LPs and picking the minimum value, or via classical disjunctive programming methods \cite{balas1979disjunctive}. 

First, note that the search region
admits the exact representation
\begin{equation}\label{eq:union}
\mathcal D_{k,N,r}
=\left(\bigcup_{m\in\mathcal L_N\setminus S_{k,r}}P_N[m]\right)\cup\left(\bigcup_{m\in S_{k,r}}\{(z,y)\in P(N):z=m\}\right)\cup\left(\bigcup_{m\in S_{k,r}}
\bigl(\mathcal A_k^{\mathrm{res}}(N)\cap P_N[m]\bigr)
\right).
\end{equation}
The first family of the above union retains the full relaxation within each surviving cell, whereas
the second retains the integer fiber of each excluded cell. Minimizing the objective function over each of the two families results in two LP families of the form
\begin{subequations}\label{eq:fecc-basic-lps}
\begin{align}
\min\;&f(z,y)&&\text{s.t. }\;\;m-\tfrac12\mathbf1\le z\le m+\tfrac12\mathbf1,\quad m\in\mathcal L_N\setminus S_{k,r}, \quad (z,y)\in P(N);\\
\min\;&f(z,y)&&\text{s.t. }\;\;z=m,\quad m\in S_{k,r}, \quad (z,y)\in P(N).
\end{align}
\end{subequations}
The third family of \eqref{eq:union} retains precisely the admissible portion of each excluded
cell. In order to show that this can also be written as a polyhedral union, we first order the non-dominated active filter records by increasing first coordinate. Specifically, assuming that $s_k$ represents the number of pairs in $\mathcal{F}_k$, i.e., $s_k:=|\mathcal{I}_k|$, we write their coordinates in that order as $(h_{k,i}, g_{k,i})$, and we then set $a_{k,i}:=\beta h_{k,i}$ and $b_{k,i}:= g_{k,i}$, so that we have
\begin{equation*}\label{eq:stair-parameters}
 0<a_{k,1}<\cdots<a_{k,s_k},\qquad b_{k,1}>\cdots>b_{k,s_k}.
\end{equation*}
The strict orders follow after removing dominated pairs and duplicate ties. If two pairs have the same first coordinate, the one with smaller second coordinate dominates, while, if their second coordinates are equal, then the one with smaller first coordinate dominates. For convenience purposes, we also set $a_{k,s_k+1}=1/2$. The staircase structure of $\mathcal{F}_k$ (also seen in Figure \ref{fig:filter}) boils down to the constraints
$h(x)\le a_{k,j+1}$, together with
$g(x,U_k)+\gamma h(x)\le b_{k,j}$ when $j\ge1$, for
$j=0,...,s_k$. Note that the last alternative related to the integer-constraint violation
is simply the universal bound $h\le1/2$. Equivalence with the filter follows by
choosing the largest index whose $h$ test fails, or $j=0$ if none fails. Moreover, observe that the funnel part of $\mathcal{A}_k^{\text{res}}$ includes constraints of the form $f(x)-\ell_k\le\kappa_\omega h(x)$. 

Therefore, in order to retrieve an efficient representation of the last family of \eqref{eq:union}, it suffices to describe inequalities involving the integer violation $h$ in terms of linear constraints. For each fixed triple
$(m,v,j)\in S_{k,r}\times V_p\times\{0,...,s_k\}$, consider the problem
\begin{equation}\label{eq:fecc-admissible-lp}
\begin{aligned}
\min_{x,t}\quad & f(x)\\
\text{s.t.}\quad
&-t\mathbf1\le z-m\le t\mathbf1\\
&0\le t\le a_{k,j+1}\\
&f(x)-\ell_k\le\kappa_\omega v^\top(z-m)\\
&f(x)\le U_k\\
&g(x,U_k)+\gamma t\le b_{k,j}\quad(\text{if }j\ge1)\\
&x=(z,y)\in P(N).
\end{aligned}
\end{equation}
By \eqref{eq:g}, the last inequality constraint can be replaced by either one or two linear inequalities, depending on the value of the current incumbent. Thus each problem \eqref{eq:fecc-admissible-lp} (i.e., for each triple) is an ordinary LP with one extra continuous variable and
at most $2p+5$ extra scalar inequalities. Note that for every $x\in P_N[m]$ the following holds:
\begin{equation}\label{eq:max}
h(x)=\|z-m\|_\infty=\max_{v\in V_p}v^\top(z-m).
\end{equation}
A feasible point of \eqref{eq:fecc-admissible-lp} lies in $P_N[m]$ and
satisfies $h(x)\le t$ by the first constraint. The second and fifth constraints therefore imply the true filter
tests, whereas $v^\top(z-m)\le h(x)$ gives the true funnel inequality (third constraint) while $v$ varies in $V_p$, due to \eqref{eq:max}.
Conversely, for any admissible point $x$ in the union of the intersections $\mathcal{A}_k^{\text{res}}\cap P_N[m]$ over the set of excluded cells, select a satisfied
staircase alternative, set $t=h(x)$, and choose a maximizing $v$ in
\eqref{eq:max}. This yields a feasible point for
\eqref{eq:fecc-admissible-lp}. Therefore, the last part of \eqref{eq:union} can be written as the union of all feasibility sets of \eqref{eq:fecc-admissible-lp} taken over $S_{k,r}\times V_p\times \{0,...,s_k\}$.

Under \hyperlink{f1}{(F1)}, every nonempty component LP of the form \eqref{eq:fecc-basic-lps}-\eqref{eq:fecc-admissible-lp} has an attained finite minimum,
since its objective is bounded below on $P(N)$. Assigning value $+\infty$
to infeasible components and selecting an optimizer with the smallest value (among all optimal solutions of the above LPs) 
therefore solves the inner subproblem \eqref{eq:sub} exactly. Importantly, the selected point $x_{k,r}$ belongs to the original
union, and its value is a lower bound on $f$ over $X(N)$, preserving all
plausible outcome tests and certificates. Of course, if all components of the LP representatives are
infeasible, then the union is empty and the algorithm terminates with outcome \hyperlink{or1}{(OR1)}. 

Although each inner subproblem of Algorithm \ref{alg:fecc} can be solved exactly by this decomposition - either by solving finitely many LPs as above or via employing appropriate disjunctive programming techniques \cite{balas1979disjunctive}  - this particular procedure can be computationally expensive, as the next section also indicates. In particular, a direct evaluation at every inner iteration $r\geq 0$ admits a representation with at most
\begin{equation}\label{eq:bad-count}
|\mathcal L_N\setminus S_{k,r}|
+\bigl[1+2p(s_k+1)\bigr]|S_{k,r}|
\end{equation}
polyhedral components: one per surviving cell, and one fiber component plus $2p(s_k+1)$
admissible-point polyehdra per excluded label. In the worse-case scenario, the inner iterative restoration procedure may require an exhaustive exploration before the return of a valid outcome.

\section{Numerical experiments}\label{sec:experiments}

We now examine two aspects of (BF): its ability to obtain a valid restoration outcome, and its performance in solving MILPs to numerical optimality. The goal of the first set of experiments is to test the ability of Algorithm \ref{alg:fecc} to output a filter-admissible point faster than a state-of-the-art implementation of the oracle. In the second batch of numerical experiments, we evaluate the performance of three different variants of (BF) against SCIP and Gurobi, the latter of which are among the fastest open source and commercial MILP solvers, respectively. All experiments are performed on a personal laptop with a 2.10GHz 12-gen Intel Core i7-1260P processor and 16GB RAM, using Python 3.13.16 and Gurobi 13.0.3 for LP and auxiliary MILP solves \cite{gurobi2026}.

\subsection{Implementations of branch-and-filter}

In all conducted experiments, we test three different (BF) variants, which are explicitly characterized by their corresponding implementation of the restoration oracle: A branch-and-bound method (BF-BB), a branch-and-cut method (BF-BC), and the proposed cell-deletion algorithm of Section \ref{sec:restoration} (BF-CD). BF-BB and BF-BC construct the selected node’s mixed-integer subproblem and solve it with Gurobi node MILPs with maximum-infeasibility branching and solver cuts disabled/enabled,
respectively. Every exposed optimal node-relaxation callback is sequentially checked for a possible fractional
point that is filter-funnel-admissible. The BF-CD variant follows the structure of Algorithm \ref{alg:fecc}. Specifically, if the initial $2p$ LPs test fails, the implementation groups a rectangle of deleted labels and represents its protected union in a single indicator MILP. Gurobi then solves this disjunctive formulation of \eqref{eq:sub} using its native machinery, with cuts enabled. Under a \hyperlink{or3}{(OR3)} outcome, we construct $Z$ according to the discussion in Section \ref{sec:znode}. All three (BF) variants use dual simplex for LP solves, Gurobi presolves (Presolve$=$2), warm-starts (LPWarmStart$=$2) and primal heuristics (Heuristics$=$0.05) are enabled, and dual reductions are disabled. Gurobi feasibility and LP optimality tolerances are $10^{-9}$, its integer tolerance is $10^{-8}$, and requested relative and absolute MILP gaps are zero. Further, any independent feasibility or integrality checks use tolerance error $10^{-7}$, and filter tests use $10^{-8}$. Accordingly, optimality throughout this section is verified within a validated error gap of at most $10^{-6}$.

\subsection{Restoration experiments}

We replay the same first natural restoration state for 27 selected MIPLIB 3.0 problems \cite{bixby1998miplib}. We initially assume no known mixed-integer feasible solution, so the metric $g$ is defined according to the first case of \eqref{eq:g} with $\delta=0.01$. The entries ``$(2)$" and ``$(3)$" in the Outcome (Out.) columns of Table \ref{tab:1} refer to \hyperlink{or2}{(OR2)} and \hyperlink{or3}{(OR3)}. ``TL" denotes an unresolved time limit of $20$ seconds. Upon a returned node-optimal or filter-admissible point $x_N$, its corresponding reported objective value and integrality violation (within three decimal points) is given by $f_{\text{out}}$ and $h_{\text{out}}$, respectively. All tables presented below give complete charged wall times in seconds, with three decimal places, and bold text corresponds to the fastest resolved time per problem.

\begin{table}[H]
\centering
\begingroup
\fontsize{8.5}{9.8}\selectfont
\setlength{\tabcolsep}{3.0pt}
\renewcommand{\arraystretch}{1.02}
\begin{tabular}{|l|crrr|crrr|crrr|}
\hline
&\multicolumn{4}{c|}{BF-BB}&\multicolumn{4}{c|}{BF-BC}&\multicolumn{4}{c|}{BF-CD}\\
\cline{2-13}
Problem & Out. & Time (s.)\hspace{-0.3cm} & $f_{\rm out}$ & $h_{\rm out}$ & Out. & Time (s.)\hspace{-0.3cm} & $f_{\rm out}$ & $h_{\rm out}$ & Out. & Time (s.)\hspace{-0.25cm} & $f_{\rm out}$ & $h_{\rm out}$\\
\hline
p0201 & (3) & 0.049 & 7497.077 & 0.385 & (2) & 0.186 & 7615.000 & 0.000 & (3) & \textbf{0.007} & 7350.000 & 0.426\\
p0282 & (3) & 0.031 & 237363.672 & 0.419 & (2) & 0.069 & 258411.000 & 0.000 & (3) & \textbf{0.009} & 237056.631 & 0.443\\
stein27 & (2) & \textbf{0.183} & 18.000 & 0.000 & (2) & 0.188 & 18.000 & 0.000 & (3) & 4.341 & 15.400 & 0.300\\
stein45 & (2) & \textbf{4.434} & 30.000 & 0.000 & (2) & 5.410 & 30.000 & 0.000 & TL & 19.999 & -- & --\\
lseu & (3) & 0.021 & 1079.438 & 0.217 & (3) & 0.079 & 1107.827 & 0.250 & (3) & \textbf{0.003} & 956.641 & 0.312\\
egout & (2) & 0.016 & 568.101 & 0.000 & (2) & \textbf{0.012} & 568.101 & 0.000 & TL & 20.001 & -- & --\\
flugpl & (2) & \textbf{0.004} & 1201500.000 & 0.000 & (2) & 0.005 & 1201500.000 & 0.000 & (3) & 0.005 & 1169362.271 & 0.371\\
noswot & (3) & \textbf{0.012} & -43.000 & 0.177 & (3) & 0.014 & -43.000 & 0.177 & (3) & 0.478 & -43.000 & 0.360\\
blend2 & (3) & 0.039 & 7.112 & 0.237 & (3) & 0.233 & 7.730 & 0.172 & (3) & \textbf{0.005} & 6.927 & 0.249\\
bell3a & (2) & 6.031 & 879218.716 & 0.000 & (2) & 2.536 & 879218.716 & 0.000 & (3) & \textbf{0.631} & 862963.002 & 0.300\\
mas74 & (3) & 0.273 & 10608.034 & 0.437 & (3) & 0.044 & 10524.444 & 0.432 & (3) & \textbf{0.005} & 10516.387 & 0.449\\
mas76 & (3) & 0.144 & 39244.839 & 0.337 & (3) & 0.253 & 39179.162 & 0.338 & (3) & \textbf{0.010} & 38934.230 & 0.352\\
markshare1 & (3) & 0.007 & 0.000 & 0.313 & (3) & 0.008 & 0.000 & 0.313 & (3) & \textbf{0.003} & 0.000 & 0.430\\
markshare2 & (3) & 0.042 & 0.000 & 0.328 & (3) & 0.080 & 0.000 & 0.324 & (3) & \textbf{0.003} & 0.000 & 0.374\\
misc03 & (3) & 0.043 & 2145.000 & 0.400 & (3) & \textbf{0.031} & 2140.000 & 0.444 & (3) & 0.296 & 1945.250 & 0.450\\
misc06 & (3) & 0.069 & 12847.056 & 0.285 & (3) & 0.132 & 12848.473 & 0.240 & (3) & \textbf{0.026} & 12841.948 & 0.287\\
misc07 & (3) & 0.094 & 1425.000 & 0.429 & (3) & \textbf{0.081} & 1425.000 & 0.429 & (3) & 0.489 & 1430.286 & 0.450\\
enigma & (3) & \textbf{0.011} & 0.000 & 0.048 & (3) & 0.016 & 0.000 & 0.003 & TL & 20.002 & -- & --\\
gt2 & TL & 20.024 & -- & -- & (2) & \textbf{0.029} & 25733.000 & 0.000 & (3) & 0.053 & 13819.774 & 0.160\\
vpm1 & (3) & 0.030 & 17.489 & 0.333 & (2) & 0.020 & 20.000 & 0.000 & (3) & \textbf{0.004} & 15.517 & 0.450\\
vpm2 & (3) & 0.014 & 11.136 & 0.446 & (3) & 0.014 & 11.136 & 0.446 & (3) & \textbf{0.005} & 9.889 & 0.450\\
pp08a & (3) & 0.121 & 3156.917 & 0.320 & (3) & 0.682 & 2933.314 & 0.350 & (3) & \textbf{0.004} & 2920.845 & 0.360\\
qiu & TL & 20.003 & -- & -- & TL & 20.002 & -- & -- & (3) & \textbf{1.600} & -848.252 & 0.318\\
danoint & (3) & 0.313 & 62.690 & 0.424 & (3) & 0.339 & 62.756 & 0.434 & (3) & \textbf{0.035} & 62.650 & 0.446\\
fixnet6 & (2) & 0.263 & 4092.000 & 0.000 & (2) & \textbf{0.125} & 4092.000 & 0.000 & (3) & 9.103 & 1484.743 & 0.092\\
modglob & TL & 20.018 & -- & -- & (2) & \textbf{0.105} & 20740508.086 & 0.000 & (3) & 1.592 & 20435196.792 & 0.440\\
set1ch & TL & 20.024 & -- & -- & (2) & 0.277 & 54537.750 & 0.000 & (3) & \textbf{0.008} & 32019.009 & 0.398\\
\hline
\end{tabular}
\endgroup
\caption{Numerical results for the restoration phase with $\kappa_\omega=1000$, $\beta=0.9$, and $\gamma=0.1$.}\label{tab:1}
\end{table}

We observe that the (CD) method returns only filter-admissible points, in comparison to the (BB) and (BC) variants, which return \hyperlink{or2}{(OR2)} and \hyperlink{or3}{(OR3)} in more balanced rates. Restricting each pairwise comparison to problems where both methods return \hyperlink{or3}{(OR3)}, BF-CD is faster on 13 of 16 problems against BF-BB and 10 of 13 against BF-BC, with a median elapsed time of just $0.010$ s, which is partly (16/24 cases) reflected upon its early inexpensive $2p$ LPs test. Moreover, there is a visible tradeoff on the returned objective value and integrality violation: In most cases where BF-CD and either BF-BB or BF-BC return \hyperlink{or3}{(OR3)}, the cell-deletion variation returns an admissible point that has higher $h$ violation but better objective reduction. Overall, (CD) provides inexpensive fractional progress and fast admissible-point detection, whereas the standard branching alternatives more frequently complete the node's integer optimization.

\subsection{Optimality experiments}

Although the proposed cell-deletion method tends to return a filter–funnel-admissible point faster than the bound and cut implementations considered here, its emphasis on early admissibility does not prioritize finding a node-optimal solution, the latter of which can substantially accelerate convergence to optimality. Moreover, BF-CD can spend considerable time in its full iterative stage when the auxiliary disjunctive subproblems become expensive, which may in turn cause a large number of restoration calls. These observations motivate a hybrid approach that combines the early-return capability of CD with a branch-and-bound fallback. The introduced hybrid method (which we still denote BF-CD for simplicity of exposition) first tries (CD) for at most 1 sec. per restoration call, subject to a cumulative allowance $\min\{3 \text{ secs}, 0.15T\}$ for run limit $T$. If (CD) exhausts an allowance or a resource limit, the branch-and-bound method of BF-BB completes the request with the remaining budget and handles all subsequent restoration calls in that run within their respective selected nodes. Also, in comparison to the previous subsection, the recovery mechanism of Algorithm \ref{alg:recovery} is enabled for this hybrid version, with the aforementioned optimality threshold of $10^{-6}$.

The set consists of ten selected mknapcb1 instances \cite{chu1998knapsack}. Besides studying the numerical performance of (BF) with the introduced metric $g$, we also study the performance of the objective function $f$, which is the standard metric choice in classical filter methods in nonconvex constrained optimization, and which shares similar theoretical guarantees with the former in this paper (see Remark \ref{rem:f1}). We highlight the objective metric used at the end each variant's abbreviation (e.g., we write BF-BB-f and BF-BB-g to distinguish the two). As a reference solver we compare the six methods against SCIP 10.0.0, which uses SoPlex 8.0.0 through PySCIPOpt 6.0.0 \cite{SCIPOptSuite10}. SCIP receives the same prepared input with (BF) and retains its native presolve, propagation, separation, branching, node selection and primal heuristics. 

\begin{table}[H]
\centering
\begingroup
\fontsize{9}{10.5}\selectfont
\setlength{\tabcolsep}{3.5pt}
\renewcommand{\arraystretch}{1.05}
\begin{tabular}{@{}lrrrrrrr@{}}
\toprule
Problem & BF-CD-$f$ & BF-CD-$g$ & BF-BC-$f$ & BF-BC-$g$ & BF-BB-$f$ & BF-BB-$g$ & SCIP\\
\midrule
mknapcb1\_01 & 21.568 & 87.250 & 23.588 & 12.654 & 9.629 & 9.274 & \textbf{8.026}\\
mknapcb1\_02 & 14.406 & 50.631 & 21.357 & 12.917 & 11.143 & 9.082 & \textbf{7.957}\\
mknapcb1\_03 & 12.866 & 50.127 & 17.165 & 13.314 & 9.819 & \textbf{5.533} & 19.324\\
mknapcb1\_11 & 10.272 & 36.742 & 16.876 & 11.123 & 8.091 & 4.625 & \textbf{2.472}\\
mknapcb1\_12 & 14.417 & 64.551 & 20.400 & 30.337 & 15.691 & 15.862 & \textbf{5.600}\\
mknapcb1\_14 & 17.313 & 55.058 & 27.168 & 13.180 & 17.219 & 11.319 & \textbf{10.488}\\
mknapcb1\_21 & 4.889 & 5.667 & 3.648 & 3.111 & 2.418 & 3.556 & \textbf{2.272}\\
mknapcb1\_22 & 7.173 & 15.723 & 7.840 & 7.825 & 9.569 & 6.258 & \textbf{1.674}\\
mknapcb1\_23 & 11.790 & 32.120 & 17.296 & 6.109 & 6.418 & \textbf{5.611} & 7.874\\
mknapcb1\_24 & 14.657 & 57.174 & 21.462 & 10.439 & 13.755 & \textbf{7.191} & 9.487\\
\bottomrule
\end{tabular}
\endgroup
\caption{Optimality times of the six (BF) variants and SCIP, with $\kappa_\omega=1000$, $\beta=0.9$, $\gamma=0.1$.}
\end{table}

In the first parameter setting which matches that of the restoration numerical experiment, the objective function constitutes a much better filter metric when it comes to the (CD) variation, but falls short against $g$ in almost all instances regarding the (BB) and (BC) implementations. However, SCIP tends to outperform all branch-and-filter methods. In addition, the cumulative outer iteration count of BF-CD-g is equal to 1706 (1151 \hyperlink{or3}{(OR3)} outcomes), which is substantially larger (almost ten times larger) than that of the other methods. For comparison, BF-CD-$f$ produces $167$ \hyperlink{or3}{(OR3)} returns, performs $188$ outer iterations, and produces more node-optimal \hyperlink{or2}{(OR2)} returns that the $g$ variant ($23$ versus $10$). This contrast is consistent with Example \ref{ex:ex}, which shows that $g$ can accept fractional points that would be otherwise rejected by $f$.

In combination with the results of Table 1, the above observations indicate that more filter-admissible returns are not automatically beneficial, as they can prolong the sequence before node completion and an approximate global certificate. Since a larger $\kappa_\omega$ enlarges the funnel and a larger $\beta$ and smaller $\gamma$ steepen the slanted filter boundary, a direct way to mitigate the number of \hyperlink{or3}{(OR3)} returns while still remaining informative is to tune the parameters appropriately, by reducing $\kappa_\omega$ and $\beta$, and increasing $\gamma$. This modification is also strongly encouraged by the complexity bound of Theorem \ref{th:complexity}, as it reduces the number of total outer iterations of the algorithm, which may then lead to a reduction in the number of oracle calls.

\begin{table}[h]
    \centering
   \begingroup
\fontsize{9}{10.5}\selectfont
\setlength{\tabcolsep}{3.5pt}
\renewcommand{\arraystretch}{1.05}
\begin{tabular}{@{}lrrrrrrr@{}}
\toprule
Problem & BF-CD-$f$ & BF-CD-$g$ & BF-BC-$f$ & BF-BC-$g$ & BF-BB-$f$ & BF-BB-$g$ & SCIP\\
\midrule
mknapcb1\_01 & 8.011 & 8.376 & 6.361 & 6.505 & 4.042 & \textbf{3.797} & 7.994\\
mknapcb1\_02 & 3.017 & 3.137 & 3.089 & 3.132 & 1.923 & \textbf{1.864} & 8.127\\
mknapcb1\_03 & 5.477 & 5.262 & 5.047 & 4.900 & 3.274 & \textbf{3.066} & 18.643\\
mknapcb1\_11 & 4.629 & 3.703 & 2.451 & 2.749 & \textbf{1.199} & 1.305 & 2.632\\
mknapcb1\_12 & 7.416 & 10.173 & 2.449 & 2.506 & 1.691 & \textbf{1.631} & 5.543\\
mknapcb1\_14 & 5.056 & 4.975 & 6.130 & 5.846 & \textbf{3.261} & 3.479 & 10.011\\
mknapcb1\_21 & 2.218 & 2.268 & 1.531 & 1.593 & 1.191 & \textbf{1.177} & 2.253\\
mknapcb1\_22 & 2.347 & 5.527 & 1.010 & 0.997 & \textbf{0.742} & 0.773 & 1.825\\
mknapcb1\_23 & 4.165 & 3.516 & 2.500 & 2.542 & \textbf{1.560} & 1.679 & 6.159\\
mknapcb1\_24 & 5.632 & 4.174 & 2.570 & 2.700 & 1.969 & \textbf{1.884} & 8.538\\
\bottomrule
\end{tabular}
\endgroup
    \caption{Optimality times of the six (BF) variants and SCIP, with $\kappa_\omega=100$, $\beta=0.4$, $\gamma=0.6$.}
    \label{tab:placeholder}
\end{table}
Following the discussion above, the second choice of parameters leads to a performance enhancement of (BF), especially with respect to the proposed (CD) variant over SCIP. The objective function still remains competitive against $g$, but the latter performs better in most instances over the (BB) variant. Within BF-CD, $g$ is faster than $f$ on half the problems and has a lower median time, but a higher mean, largely because of the additional work on mknapcb1\_12 and mknapcb1\_22 which account for most of the \hyperlink{or3}{(OR3)} outcomes. Further, (CD) terminates faster than the (BC) variant in a few occasions, while both BF-BB variants outperform SCIP on every single problem. Interestingly, all BF-BB and BF-BC methods now terminate after at most two outer iterations for each knapsack problem, i.e, each method performs at most two restoration iterations - in comparison to approximately 17 on average in the previous experimental run - returns almost none \hyperlink{or3}{(OR3)} outcomes (with an exception of BF-BB at mknapcb1\_21, which returned one such outcome), and then outputs an optimal solution.

\subsection{Time to approximate solutions}

Due to the implementation of the restoration oracle, for the last parameter setting above, BF-BB and BF-BC essentially represent classical branch-and-bound and branch-and-cut methods, respectively. However, they should not be confused with the standard Gurobi native (BB)/(BC) methods: (BF) branches externally, selects a child, and then solves successive node-restricted Gurobi models. Furthermore, restoration calls solve separate subproblems and perform additional fractional-admissibility callbacks. The two Gurobi native branching processes instead search the full prepared model in one solve instance. Therefore, the initial restricted problem, the search order, and the retained search states are substantially different, even when BF-BB/BF-BC return no \hyperlink{or3}{(OR3)} outcome during the restoration phase.

This distinction motivates a more detailed analysis of the performance differences of (BB) and (BC) between Gurobi and the proposed filter search mechanism. Here, we examine the maximum time needed to identify an approximate optimal solution. For scaling purposes, the notion of approximation is slightly different to that of Definition \ref{def:optimal}, in the sense that we are now requiring a a bound with respect to a relative objective error. More specifically, for problem $i$ and method $M$, let $U_{i,t}^M$ represent the best validated observed incumbent in minimization form at time $t$, and $z^*_i$ be the common verified optimal value, determined retrospectively. Given optimality threshold $\varepsilon\geq 0$, we define
\begin{equation*}
    T_{i,\varepsilon}^M:=\inf\left\{t\geq 0: U_{i,t}^M-z^*_i\leq \varepsilon\max\{1,|z^*_i|\}\right\}.
\end{equation*}
We examine the time allowance required to attain a prescribed objective accuracy throughout the benchmark panel, via the Gurobi and (BF) branching variants. More specifically, for each method, we study the quantity $\tau_\varepsilon:=\max_{i\in \{1,...,10\}} T_{i,\varepsilon}$, which represents the smallest common per-problem time budget within which every tested instance reaches the prescribed target.

In the plots below, we study the ratio $\tau_\varepsilon^{\text{Gurobi}}/\tau_\varepsilon^{\text{BF}}$ for optimality error $\varepsilon\in [0,0.01]$. A ratio above 1 indicates that (BF) has a smaller maximum time, over the ten-problem batch, to first finding a feasible solution within the prescribed optimality error. For consistency purposes, at $\varepsilon=0$ the target is first discovery of an optimal incumbent within the predetermined optimality tolerance $10^{-6}$, meaning that, during implementation, the right-hand side of the error bound in the definition of $T_{i,\varepsilon}^M$ equals $\max\{10^{-6},\varepsilon\max\{1,|f^*|\}\}$. We analyze the ratio for Gurobi-BB/Gurobi-BC against the BF-BB/BF-BC variants, respectively. Here, the Gurobi-BB and Gurobi-BC native references use the identical prepared inputs and corresponding native restoration policies (cuts disabled/enabled, respectively), with exactly the same heuristic, presolve and numerical settings as the (BF) methods.
\begin{figure}[htbp]
    \centering
    \includegraphics[width=\textwidth]{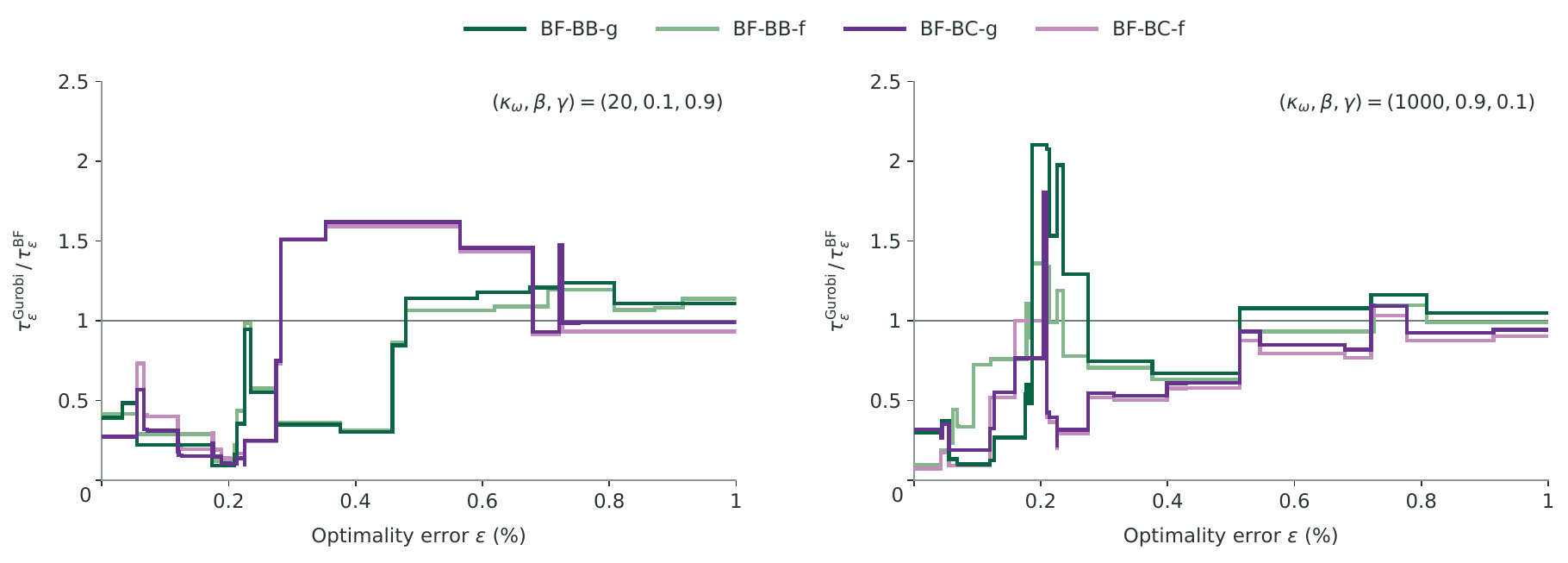}
    \caption{Ratio $\tau_{\varepsilon}^{\mathrm{Gurobi}}/
    \tau_{\varepsilon}^{\mathrm{BF}}$ as a function of the optimality
    error $\varepsilon\in [0,0.01]$ over the ten
    knapsack problems. Each (BF) variant is compared with its
    corresponding Gurobi (BB) or (BC) reference method (e.g, the purple trajectory corresponds to the ratio $\tau_{\varepsilon}^{\text{Gurobi-BC}}/\tau_{\varepsilon}^{\text{BF-BC-g}}$). A ratio above 1 translates to (BF) finding an approximate solution faster than Gurobi.}
    \label{fig:tau}
\end{figure}

In the first experimental run (left plot of Figure \ref{fig:tau}), the BF-BB variants give a $\tau_\varepsilon$ ratio above 1 approximately in the optimality range $\varepsilon\in[0.005,0.01]$, which means that BF-BB discovers an approximate solution earlier than the native Gurobi branch-and-bound method. Moreover, BF-BC-g lowers $\tau_\varepsilon$ at $0.5\%$ error from 212.356 to 131.066 ms, which represents a 1.620 ratio and 38.3\% reduction compared to Gurobi-BC. This advantage extends in an approximate error interval $0.3\%-0.7\%$. It should be noted that different problems determine the (BF) maximum as \(\varepsilon\) varies; within the favorable regions, BF-BB-g passes among mknapcb1\_12, \_02, \_01, and \_22, while BF-BC-g switches between \_14 and \_02. Crucially, at $\varepsilon=0.005$, removing any single problem from both compared methods preserves the advantage of BF-BC-g and BF-BC-f, with minimum recomputed ratios of 1.111 and 1.165, respectively, compared with 1.620 and 1.593 for the complete batch. At the same tolerance, BF-BB-g retains a ratio above 1 in nine of the ten single-problem removals; the only exception is mknapcb1\_01, whose removal reduces the ratio from 1.140 to 0.897. 

Similarly to the previous $(100,0.4,0.6)$ run, in the tighter $(20,0.1,0.9)$ setting, both (BF) methods output no \hyperlink{or3}{(OR3)} outcomes during restoration and terminate only after one outer iteration, meaning that they represent classical (BB) and (BC) methods incorporated with an initial filter search strategy. The unique characteristics of the filter mechanism become more evident in the second parameter setting (right plot of Figure \ref{fig:tau}), where restoration is a far more active part of (BF)'s performance. Across the full run, all
ten knapsack problems produce several admissible restoration returns under BF-BB-g: 175 \hyperlink{or3}{(OR3)} outcomes and 195 outer iterations in total. At the same time, BF-BB-g outperforms Gurobi-BB in the interval $0.52\%-1\%$, and is the only one to do so in such a large interval across the four variants. In addition, a significant ratio spike for both methods is observed in the interval $0.19\%-0.25\%$. Interestingly, at $\varepsilon=0.002$, the batch maximum and mean target times are
respectively 52.5\% and 20.8\% lower than those of the matched Gurobi-BB reference.

It should be mentioned that Gurobi converges to an optimal solution (within error tolerance $10^{-6}$) faster than (BF) in both parameter settings, which is reflected upon the $\tau_\varepsilon$ ratio for $\varepsilon=0$ in Figure \ref{fig:tau}. Therefore, the demonstrated strength of (BF) in this problem batch is competitive access to useful solution quality, rather than convergence to absolute optimality, while including a regime where restoration activity is sustainable.

\section{Discussion}\label{sec:discussion}

We have introduced a new iterative algorithm that directs MILP search through explicit acceptance conditions on fractional relaxation points. Under mild assumptions, the proposed method yields global convergence and outer iteration complexity guarantees. This provides a principled basis for investigating when inexpensive fractional progress can effectively replace the direct completion of a node’s integer optimization. The introduced filter framework is simple and follows from classical filter methods in nonconvex optimization \cite{fletcher2002nonlinear}, thus leaving several components open to further development. First, the initial branching step can be complemented by explicit cut generation and domain propagation within the outer algorithm. Such operations can strengthen relaxations and reduce the remaining search, as illustrated by classical (BC) methods \cite{balas1996mixed}, which may then lead to the development of more efficient \textit{branch-cut-and-filter} algorithms for MILPs. Second, one could adapt other filter mechanisms to the proposed framework, such as nonmonotone variants that permit a limited number of failed comparisons with existing entries \cite{shen2012nonmonotone}.  However, the transition of our theoretical warranties to more complicated filter settings must be rigorously verified. Further, the success of the filter mechanism in establishing global optimality should be tested against other settings of mixed-integer optimization, such as in convex MINLP, where continuous relaxations and polyhedral outer approximations provide valid lower bounds \cite{bonami2008algorithmic}. Finally, the filter acceptance rule should be explicitly studied as a node-selection component within a common solver environment, against the state-of-the-art best-bound, depth-first, and hybrid search strategies \cite{linderoth1999computational}.

The counter-intuitive definition of $g$, which promotes increasing the objective value rather than decreasing it, seems to perform better in settings where (BF) becomes competitive against benchmark solvers, and admits points that would otherwise not be acceptable by an $f$-filter (Example \ref{ex:ex}), despite the fact that it admits a worse outer iteration complexity bound than $f$. An analytical study of the behavior of the two objective measures is highly encouraged, in order to clarify their respective advantages and limitations. Such an analysis may also identify alternative measures that better reflect the distinct nature of mixed-integer programming: In comparison to nonconvex continuous optimization, where progress involves both improving feasible solutions and strengthening relaxation bounds, an increase in a node's optimal relaxation value can be beneficial even when it does not improve the incumbent. 

Besides exploring alternative optimality metrics for (BF), another future direction involves the design of other optimality mechanisms. We have shown that the filter is responsible for driving the sequence of generated iterates to integrality, while the funnel mechanism ensures convergence to global optimality. In classical filter methods, convergence to first-order stationary points is established by combining filter acceptance with suitable step-generation procedures and sufficient decrease conditions within trust-region or line-search frameworks \cite{fletcher2002global,wachter2005line}.  Investigating alternative mechanisms that play an analogous role in (BF), while preserving its global optimality guarantees, is therefore another promising direction.

Although branch-and-filter is accompanied by some promising computational results, there is plenty of room for improvement. First, the restoration experiments identify a useful computational tradeoff: The proposed cell-deletion
method often returns a fractional filter-admissible point quickly, whereas the tested (BB) and (BC) implementations more frequently complete the selected node’s integer
optimization. However, repeated auxiliary disjunctive solves can make the full (CD) stage expensive. A hybrid modification of (CD) with a (BB) fallback accelerates convergence and indicates, along with the computational efficiency of the other two variations, that a more balanced search between fractional filter-admissible and node-optimal points can reduce computational work significantly. In addition, several improvements and heuristics could make the restoration phase more efficient. For instance, information from earlier calls could be reused to warm-start subproblems, guide the initial inexpensive tests, or identify large groups of cells that can be deleted simultaneously. The time spent on cell deletion before switching to branching could also adapt to its recent performance, the dimensions of the inner subproblems, and the progress made in improving bounds. Another further direction is to extend the convergence analysis to inexact LP solves, allowing controlled numerical errors while preserving the validity of the introduced acceptance tests, relaxation bounds, and restoration certificates.

A future computational study should also focus on parameter tuning. As of now, there is a tradeoff between widening the filter and funnel sets, thus expanding the search for admissible iterates, and tightening the same acceptance sets, leading to fewer outer iterations. This mathematical observation is supported by the conducted numerical experiments, and the effect of each parameter on the final convergence time requires a deeper sensitivity analysis. Furthermore, the worst-case time-to-target results over the tested panel indicate that our introduced method is able to identify approximate optimal solutions faster than the state-of-the-art solver Gurobi in the selected problem batch, under standard (BB) and (BC) implementations of the restoration oracle. These numerical results motivate further study of intermediate solution quality, alongside total solution time and certificate production. Finally, the filter's role in reducing integrality violation motivates investigating (BF) as a method for finding a first feasible solution, under simple modifications (Section \ref{subsec:feasibility}). A dedicated study should compare it with classical feasibility pump methods, which alternate between integer rounding and optimization over the continuous relaxation \cite{fischetti2005feasibility}. Overall, a much more thorough computational study is required to determine absolute efficiency of the proposed (BF) method.

In conclusion, our numerical experiments indicate that a significant part of the computational work is performed within the restoration phase, making its improvement a central direction for future research. The theoretical warranties provided by the proposed filter device also provide a strong motivation for pursuing this direction. We hope that this novel use of classical filter methods inside branching procedures will encourage further theoretical and computational developments, and that an efficient implementation of the introduced restoration oracle could help realize the potential of branch-and-filter as a competitive method for generic MILP problems.

\begingroup

\section*{Use of AI Statement}

The use of ChatGPT-Pro (GPT-6 Astra) is acknowledged by the author in generating the figures and improving the final exposition of the tables. The same model and Codex assisted with the implementation of the numerical experiments (Section \ref{sec:experiments}). The author takes full responsibility for the veracity of all theoretical claims and technical details in this paper.

\raggedright
\bibliography{references}

@article{FletcherLeyfferToint2002,
  author = {Fletcher, Roger and Leyffer, Sven and Toint, Philippe L.},
  title = {On the Global Convergence of a Filter--{SQP} Algorithm},
  journal = {SIAM Journal on Optimization},
  volume = {13}, number = {1}, pages = {44--59}, year = {2002}
}

@article{Hoffman1952,
  author = {Hoffman, Alan J.},
  title = {On Approximate Solutions of Systems of Linear Inequalities},
  journal = {Journal of Research of the National Bureau of Standards},
  volume = {49}, number = {4}, pages = {263--265}, year = {1952}
}

@article{sun2024decomposition,
  title={Decomposition methods for global solution of mixed-integer linear programs},
  author={Sun, Kaizhao and Sun, Mou and Yin, Wotao},
  journal={SIAM Journal on Optimization},
  volume={34},
  number={2},
  pages={1206--1235},
  year={2024},
  publisher={SIAM}
}

@book{Schrijver1986,
  author = {Schrijver, Alexander},
  title = {Theory of Linear and Integer Programming},
  publisher = {John Wiley \& Sons}, address = {Chichester}, year = {1986}
}

@article{feizollahi2017exact,
  title={Exact augmented Lagrangian duality for mixed integer linear programming},
  author={Feizollahi, Mohammad Javad and Ahmed, Shabbir and Sun, Andy},
  journal={Mathematical Programming},
  volume={161},
  number={1},
  pages={365--387},
  year={2017},
  publisher={Springer}
}

@article{dash2018binary,
  title={Binary extended formulations of polyhedral mixed-integer sets},
  author={Dash, Sanjeeb and G{\"u}nl{\"u}k, Oktay and Hildebrand, Robert},
  journal={Mathematical Programming},
  volume={170},
  number={1},
  pages={207--236},
  year={2018},
  publisher={Springer}
}

@article{testa2019distributed,
  title={Distributed mixed-integer linear programming via cut generation and constraint exchange},
  author={Testa, Andrea and Rucco, Alessandro and Notarstefano, Giuseppe},
  journal={IEEE Transactions on Automatic Control},
  volume={65},
  number={4},
  pages={1456--1467},
  year={2019},
  publisher={IEEE}
}

@article{chin2003global,
  title={On the global convergence of an SLP--filter algorithm that takes EQP steps},
  author={Chin, Choong Ming and Fletcher, Roger},
  journal={Mathematical Programming},
  volume={96},
  number={1},
  pages={161--177},
  year={2003},
  publisher={Springer}
}

@article{fletcher2002nonlinear,
  title={Nonlinear programming without a penalty function},
  author={Fletcher, Roger and Leyffer, Sven},
  journal={Mathematical programming},
  volume={91},
  number={2},
  pages={239--269},
  year={2002},
  publisher={Springer}
}

@article{gould2010nonlinear,
  title={Nonlinear programming without a penalty function or a filter},
  author={Gould, Nicholas IM and Toint, Ph L},
  journal={Mathematical Programming},
  volume={122},
  number={1},
  pages={155--196},
  year={2010},
  publisher={Springer}
}

@misc{nocedal2006numerical,
  title={Numerical Optimization},
  author={Nocedal, J and Wright, S},
  year={2006},
  publisher={Springer}
}

@incollection{land2009automatic,
  title={An automatic method for solving discrete programming problems},
  author={Land, Ailsa H and Doig, Alison G},
  booktitle={50 Years of Integer Programming 1958-2008: From the Early Years to the State-of-the-Art},
  pages={105--132},
  year={2009},
  publisher={Springer}
}

@article{balas1996mixed,
  title={Mixed 0-1 programming by lift-and-project in a branch-and-cut framework},
  author={Balas, Egon and Ceria, Sebasti{\'a}n and Cornu{\'e}jols, G{\'e}rard},
  journal={Management Science},
  volume={42},
  number={9},
  pages={1229--1246},
  year={1996},
  publisher={INFORMS}
}

@article{barnhart1998branch,
  title={Branch-and-price: Column generation for solving huge integer programs},
  author={Barnhart, Cynthia and Johnson, Ellis L and Nemhauser, George L and Savelsbergh, Martin WP and Vance, Pamela H},
  journal={Operations research},
  volume={46},
  number={3},
  pages={316--329},
  year={1998},
  publisher={INFORMS}
}

@article{vanderbeck1996exact,
  title={An exact algorithm for IP column generation},
  author={Vanderbeck, Fran{\c{c}}ois and Wolsey, Laurence A},
  journal={Operations research letters},
  volume={19},
  number={4},
  pages={151--159},
  year={1996},
  publisher={Elsevier}
}

@article{balas1993lift,
  title={A lift-and-project cutting plane algorithm for mixed 0--1 programs},
  author={Balas, Egon and Ceria, Sebasti{\'a}n and Cornu{\'e}jols, G{\'e}rard},
  journal={Mathematical programming},
  volume={58},
  number={1},
  pages={295--324},
  year={1993},
  publisher={Springer}
}

@article{chen2011finite,
  title={Finite disjunctive programming characterizations for general mixed-integer linear programs},
  author={Chen, Binyuan and K{\"u}{\c{c}}{\"u}kyavuz, Simge and Sen, Suvrajeet},
  journal={Operations Research},
  volume={59},
  number={1},
  pages={202--210},
  year={2011},
  publisher={INFORMS}
}

@article{benders1962partitioning,
  title={Partitioning procedures for solving mixed-variables programming problems},
  author={Benders, Jacques F},
  journal={Numerische mathematik},
  volume={4},
  number={1},
  pages={238--252},
  year={1962},
  publisher={Springer-Verlag Berlin/Heidelberg}
}

@article{balas1967discrete,
  title={Discrete programming by the filter method},
  author={Balas, Egon},
  journal={Operations Research},
  volume={15},
  number={5},
  pages={915--957},
  year={1967},
  publisher={INFORMS}
}

@article{leyffer2001integrating,
  title={Integrating SQP and branch-and-bound for mixed integer nonlinear programming},
  author={Leyffer, Sven},
  journal={Computational optimization and applications},
  volume={18},
  number={3},
  pages={295--309},
  year={2001},
  publisher={Springer}
}

@article{abhishek2010filmint,
  title={FilMINT: An outer approximation-based solver for convex mixed-integer nonlinear programs},
  author={Abhishek, Kumar and Leyffer, Sven and Linderoth, Jeff},
  journal={INFORMS Journal on computing},
  volume={22},
  number={4},
  pages={555--567},
  year={2010},
  publisher={INFORMS}
}

@inproceedings{fernandes2014branch,
  title={Branch and bound based coordinate search filter algorithm for nonsmooth nonconvex mixed-integer nonlinear programming problems},
  author={Fernandes, Florbela P and Costa, M Fernanda P and Fernandes, Edite MGP},
  booktitle={International Conference on Computational Science and Its Applications},
  pages={140--153},
  year={2014},
  organization={Springer}
}

@article{abramson2004filter,
  title={Filter pattern search algorithms for mixed variable constrained optimization problems},
  author={Abramson, Mark A and Audet, Charles and Dennis Jr, John E},
  year={2004}
}

@article{balas1979disjunctive,
  title={Disjunctive programming},
  author={Balas, Egon},
  journal={Annals of discrete mathematics},
  volume={5},
  pages={3--51},
  year={1979},
  publisher={Elsevier}
}

@article{wachter2005line,
  title={Line search filter methods for nonlinear programming: Motivation and global convergence},
  author={W{\"a}chter, Andreas and Biegler, Lorenz T},
  journal={SIAM Journal on Optimization},
  volume={16},
  number={1},
  pages={1--31},
  year={2005},
  publisher={SIAM}
}

@article{fletcher2002global,
  title={Global convergence of a trust-region SQP-filter algorithm for general nonlinear programming},
  author={Fletcher, Roger and Gould, Nicholas IM and Leyffer, Sven and Toint, Philippe L and W{\"a}chter, Andreas},
  journal={SIAM Journal on Optimization},
  volume={13},
  number={3},
  pages={635--659},
  year={2002},
  publisher={SIAM}
}

@article{linderoth1999computational,
  title={A computational study of search strategies for mixed integer programming},
  author={Linderoth, Jeff T and Savelsbergh, Martin WP},
  journal={INFORMS Journal on Computing},
  volume={11},
  number={2},
  pages={173--187},
  year={1999},
  publisher={INFORMS}
}

@article{boland2015augmented,
  title={On the augmented Lagrangian dual for integer programming},
  author={Boland, Natashia L and Eberhard, Andrew C},
  journal={Mathematical Programming},
  volume={150},
  number={2},
  pages={491--509},
  year={2015},
  publisher={Springer}
}

@article{belotti2013mixed,
  title={Mixed-integer nonlinear optimization},
  author={Belotti, Pietro and Kirches, Christian and Leyffer, Sven and Linderoth, Jeff and Luedtke, James and Mahajan, Ashutosh},
  journal={Acta Numerica},
  volume={22},
  pages={1--131},
  year={2013},
  publisher={Cambridge University Press}
}

@article{fischetti2010heuristics,
  title={Heuristics in mixed integer programming},
  author={Fischetti, Matteo and Lodi, Andrea and others},
  journal={Wiley Encyclopedia of Operations Research and Management Science. John Wiley \& Sons, Inc},
  pages={2--23},
  year={2010}
}

@article{san2022new,
  title={A new branch-and-filter exact algorithm for binary constraint satisfaction problems},
  author={San Segundo, Pablo and Furini, Fabio and Le{\'o}n, Rafael},
  journal={European Journal of Operational Research},
  volume={299},
  number={2},
  pages={448--467},
  year={2022},
  publisher={Elsevier}
}

@misc{gurobi2026,
  author = {{Gurobi Optimization, LLC}},
  title  = {{Gurobi Optimizer Reference Manual}},
  year   = {2026},
  url    = {https://www.gurobi.com}
}

@techreport{SCIPOptSuite10,
  author = {Christopher Hojny and Mathieu Besan{\c{c}}on
    and Ksenia Bestuzheva and Sander Borst and Antonia Chmiela
    and Jo{\~{a}}o Dion{\'{i}}sio and Leon Eifler
    and Mohammed Ghannam and Ambros Gleixner
    and Adrian G{\"o}{\ss} and Alexander Hoen
    and Rolf van der Hulst and Dominik Kamp and Thorsten Koch
    and Kevin Kofler and Jurgen Lentz and Stephen J. Maher
    and Gioni Mexi and Erik M{\"u}hmer and Marc E. Pfetsch
    and Sebastian Pokutta and Felipe Serrano and Yuji Shinano
    and Mark Turner and Stefan Vigerske and Matthias Walter
    and Dieter Weninger and Liding Xu},
  title       = {{The SCIP Optimization Suite 10.0}},
  type        = {Technical Report},
  institution = {Optimization Online},
  month       = nov,
  year        = {2025},
  url         = {https://optimization-online.org/2025/11/the-scip-optimization-suite-10-0/}
}

@article{bixby1998miplib,
  author  = {Bixby, Robert E. and Ceria, Sebastian
    and McZeal, Cassandra M. and Savelsbergh, Martin W. P.},
  title   = {An Updated Mixed Integer Programming Library:
    {MIPLIB} 3.0},
  journal = {Optima},
  volume  = {58},
  pages   = {12--15},
  year    = {1998}
}

@article{chu1998knapsack,
  author  = {Chu, P. C. and Beasley, J. E.},
  title   = {A Genetic Algorithm for the Multidimensional
    Knapsack Problem},
  journal = {Journal of Heuristics},
  volume  = {4},
  number  = {1},
  pages   = {63--86},
  year    = {1998}
}

@article{bonami2008algorithmic,
  title={An algorithmic framework for convex mixed integer nonlinear programs},
  author={Bonami, Pierre and Biegler, Lorenz T and Conn, Andrew R and Cornu{\'e}jols, G{\'e}rard and Grossmann, Ignacio E and Laird, Carl D and Lee, Jon and Lodi, Andrea and Margot, Fran{\c{c}}ois and Sawaya, Nicolas and others},
  journal={Discrete optimization},
  volume={5},
  number={2},
  pages={186--204},
  year={2008},
  publisher={Elsevier}
}

@article{fischetti2005feasibility,
  title={The feasibility pump},
  author={Fischetti, Matteo and Glover, Fred and Lodi, Andrea},
  journal={Mathematical programming},
  volume={104},
  number={1},
  pages={91--104},
  year={2005},
  publisher={Springer}
}

@article{shen2012nonmonotone,
  title={A nonmonotone filter method for nonlinear optimization},
  author={Shen, Chungen and Leyffer, Sven and Fletcher, Roger},
  journal={Computational Optimization and Applications},
  volume={52},
  number={3},
  pages={583--607},
  year={2012},
  publisher={Springer}
}
\endgroup
\end{document}